\newtheorem{prop}{Proposition}[section]
\newtheorem{definition}{Definition}[section]
\newtheorem{cor}{Corollary}[section]
\newtheorem{theorem}{Theorem}[section]
\newtheorem*{theorema}{Theorem A}
\newtheorem*{theoremb}{Theorem B}
\newtheorem{lemma}{Lemma}[section]
\newtheorem{sublemma}{Sublemma}[section]
\newtheorem{remark}{Remark}[section]
\begin{document}

\author{Hiroki Takahasi *}
\thanks{* Department of Mathematics, Faculty of Science,
Kyoto University, Kyoto 606-8502, JAPAN\\
(e-mail address: takahasi@math.kyoto-u.ac.jp)}


\title{Statistical properties of nonuniformly expanding
1d maps with logarithmic singularities}

\begin{abstract}
For a certain parametrized family of maps on the circle, with
critical points and logarithmic singularities where derivatives
blow up to infinity, a positive measure set of parameters was
constructed in \cite{TW09}, corresponding to maps which exhibit
nonuniformly hyperbolic behavior. For these parameters, we prove
the existence of absolutely continuous invariant measures with
good statistical properties, such as exponential decay of
correlations. Combining our construction with the logarithmic
nature of the singularities, we obtain a positive variance in
Central Limit Theorem,
for any nonconstant H\"older continuous observable. 
\end{abstract} \maketitle

\section{Introduction}
Let $f_{a, L}: {\mathbb R} \to {\mathbb R}$ be such that
\begin{equation}\label{f1-s1}
f_{a,L}\colon x\mapsto x+a+L\ln |\Phi(x)|,
\end{equation}
where $a \in [0,1]$, $L \in {\mathbb R}$ are real parameters and
$\Phi(x)$ is such that $\Phi(x+1) = \Phi(x)$. We assume that
$\Phi(x)$ is a Morse function, the graph of $y = \Phi(x)$
intersects $x$-axis, and all the intersections are transverse. The
functions $f_{a, L}$ induce a two parameter family of
endomorphisms on $S^1={\mathbb R} /{\mathbb Z}$, having
non-degenerate critical points and singularities where the value
of $f_{a,L}$ is undefined. For sufficiently large $|L|$, a
positive measure set $\Delta(L)$ of the parameter $a$ was
constructed in \cite{TW09}, such that if $a \in \Delta(L)$, then
$f_{a, L}$ admits an invariant measure that is absolutely
continuous with respect to Lebesgue measure (acim). In this paper
we study statistical properties of this measure.

This class of systems is motivated by the recent studies
\cite{Wa,WO,WOk} on homoclinic tangles and strange attractors in
periodically forced differential equations ($S^1$ reflects the
time-periodicity of the force). In brief terms, the maps $f_{a,L}$
as we treat here can be obtained by considering first-return maps
of the flow (in the extended phase space introducing the time as a
new variable) to appropriate cross-sections, and then passing to a
singular limit. This last step results in a considerable
simplification of the dynamics. Nevertheless, the map $f_{a,L}$
retains a large share of the complexity of the corresponding flow,
and thus, provide an important insight to its behavior. 

Apart from this original motivation, the family of circle maps is
of interest in its own light, for the feature of the logarithmic
singularities that turns out to influence on some statistical
properties of the acips, as we explain in the sequel.

\subsection{Statements of the results} For smooth maps on the interval
or the circle, it is now classical that an exponential growth of
derivatives along the orbits of critical points implies the
existence of acims with good statistical properties
\cite{BV,BLS,KN,Y1,Y2}. Our first result is a version of this for
$f_{a,L}$ with critical and singular points. Dynamical properties
shared by maps corresponding to parameters in $\Delta(L)$ are
listed in Section \ref{gs}.

\begin{theorema}
For any $f\in\{f_{a,L}\colon a\in\Delta(L)\}$ there exists an
ergodic $f$-invariant probability measure $\mu$ that is equivalent
to Lebesgue measure. In addition,
\smallskip

\noindent(1) for any $\eta\in(0,1]$ there exists $\tau\in(0,1)$
such that for any H\"older continuous function $\varphi$ on $S^1$
with H\"older exponent $\eta$ and $\psi\in L^\infty(\mu)$, there
exists a constant $K(\varphi,\psi)$ such that
$$\left| \int(\varphi\circ f^n)\psi d\mu-\int\varphi
d\mu\int\psi d\mu\right|\leq K(\varphi,\psi)\tau^{n}\quad\text{for
every }n>0;$$
\smallskip

\noindent (2) $(f,\mu)$ satisfies Central Limit Theorem (see the
definition below).
\end{theorema}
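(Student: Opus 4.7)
My plan is to prove Theorem A by constructing a Young tower---a first-return-type induced Markov map with uniform expansion, bounded distortion, and exponentially decaying return-time tails---and then invoking Young's theorems on the existence of an SRB measure, exponential decay of correlations against H\"older observables, and the Central Limit Theorem.

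First I would fix a reference interval $\Delta_0\subset S^1$ bounded away from the critical and singular sets. Using the nonuniform expansion property established in \cite{TW09} for parameters $a\in\Delta(L)$ together with the additional dynamical properties listed in Section \ref{gs}, I would show that Lebesgue-a.e.\ point $x$ has infinitely many \emph{hyperbolic times}, i.e.\ times $n$ at which $|(f^n)'(x)|$ grows exponentially and $f^n$ restricted to a preimage interval of definite size around $x$ has uniformly bounded distortion. Selecting those preimage intervals that eventually cover $\Delta_0$ provides a countable Markov partition $\{\Delta_{0,i}\}$ of $\Delta_0$, a return time $R$, and an induced map $F=f^R\colon\bigsqcup_i\Delta_{0,i}\to\Delta_0$ so that each branch $F|_{\Delta_{0,i}}$ is a uniformly expanding bijection onto $\Delta_0$ with uniformly bounded distortion.

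The crux of the argument is the tail estimate $\mathrm{Leb}\{R>n\}\leq C\theta^n$ for some $\theta\in(0,1)$. This is where the logarithmic singularities intervene most delicately: derivatives blow up to infinity near them, but the parameter exclusion of \cite{TW09} forces the critical and singular orbits to stay at controlled distances from the critical/singular set, and the logarithmic (as opposed to polynomial) growth of $|\ln|\Phi||$ should keep the contribution of deep returns summable. Combining these bounds with a large-deviation estimate on the density of hyperbolic times (in the spirit of Alves--Luzzatto--Pinheiro and Bruin--Luzzatto--van Strien) would yield the exponential tail.

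Once the tower is built, Young's machinery produces a quotient SRB measure that lifts back to an $f$-invariant probability measure $\mu$ equivalent to Lebesgue; part (1) of Theorem A follows at once from the decay-of-correlations theorem for Young towers with exponential tails, and part (2) follows from the corresponding CLT, noting that the logarithmic singularities guarantee positivity of the asymptotic variance $\sigma^2(\varphi)>0$ for every nonconstant H\"older observable (the coboundary equation $\varphi-\int\varphi\, d\mu=u\circ f-u$ for bounded $u$ is incompatible with the unbounded excursions of $\varphi\circ f^n$ that the logarithmic singularity induces near its preimages). The main obstacle I anticipate is precisely the exponential tail estimate: standard arguments for smooth critical circle maps must be carefully adapted so that orbits entering shrinking neighborhoods of the singular set, where $f$ itself is unbounded, do not create anomalously long induced return times.
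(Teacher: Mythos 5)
Your high-level strategy---build a Young tower with uniformly expanding branches, bounded distortion, and exponentially decaying return-time tails, then cite \cite{Y2}---matches the paper's, but the specific base you choose creates a gap. You induce onto a small reference interval $\Delta_0$ bounded away from $C\cup S$; the paper instead builds an induced Markov map $F=f^R$ on a full-measure countable partition $\mathcal Q$ of the \emph{entire} circle, with each branch mapping onto the whole $S^1$. This is a deliberate design choice, not a cosmetic one: Theorem A asserts $\mu$ is \emph{equivalent} to Lebesgue, and with a return map to a small $\Delta_0$, Young's machinery only hands you an acim; showing the density is positive on all of $S^1$ is an extra step your sketch does not supply. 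The paper's full-branch construction, which exploits the singularities to ``wrap'' intervals around $S^1$ (Lemma \ref{mix}, Sublemma \ref{rap}), makes equivalence immediate, and is moreover the structure on which the Theorem B coboundary argument depends (the Martingale/inverse-branch regularization needs branches onto all of $S^1$).

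Second, you identify the tail estimate as the crux but your treatment of it is heuristic, and the heuristic points in a direction the paper does not take. The paper does not use hyperbolic times in the Alves--Luzzatto--Pinheiro sense; Proposition \ref{escape} is proved by a combinatorial large-deviation count over \emph{itineraries} $\bold i=(\nu_k,r_k,c_k)$ encoding only the \emph{deep} free returns, with a measure bound $|P_n(\bold i)|< L^{-R/3}$ (Lemma \ref{pro-R1}) plus Lemma \ref{f4-s3.1} to absorb the shallow returns. Your appeal to ``logarithmic vs polynomial growth keeping deep returns summable'' does not engage the actual obstruction, which the paper singles out: near a logarithmic singularity, orbits can return to a singular neighborhood extremely frequently, so the usual bookkeeping for one-dimensional maps with critical/singular sets does not transfer. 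Finally, a misattribution: positivity of $\sigma^2$ for nonconstant H\"older observables is Theorem B, not part (2) of Theorem A; Theorem A(2) only claims the CLT holds (allowing $\sigma=0$), and that follows directly from \cite{Y2} once the tower exists, so you should not invoke a variance-positivity argument inside the proof of Theorem A.
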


This is not the first result on statistical properties of
one-dimensional maps with critical and singular points. A certain
class of maps were studied in \cite{DHL06}, including the
Lorenz-like maps corresponding to positive measure sets of
parameters constructed in \cite{LT,LV}. Maps with singularities
and infinitely many critical points were studied in \cite{PRV}. To
our knowledge, however, there is no previous study on statistical
properties of maps with logarithmic singularities. For instance,
one key aspect of our maps that has no analogue in those of the
previous studies is that, returns to a neighborhood of
singularities can happen very frequently. The previous arguments
seem not sufficient to deal with points like this.

In the study of dynamical systems with singularities, influences
of singularities on dynamics are not well understood. Indeed,
singularities with blowing up derivatives help to create
expansion, and to enforce a chaotic behavior. However, little is
known on influences of singularities on statistical properties of
the systems. In this direction, one result we are aware is
\cite{LMP04} which takes advantage of the singularity of the
expanding Lorenz map to show that the Lorenz attractor is mixing.
In the proof of Theorem A, we design our construction in such a
way that allows us to draw a new conclusion on Central Limit
Theorem, viewed as an influence of the logarithmic singularities.

Let $g\colon X\to X$ be a dynamical system preserving a
probability measure $\nu$. We say $(g,\nu)$ satisfies Central
Limit Theorem if for any H\"older continuous function $\phi$ on
$X$ with $\int\phi d\nu=0$,
$$ \frac{1}{\sqrt{n}}\sum_{i=0}^{n-1}\phi\circ g^i \
\longrightarrow \ \mathcal N(0,\sigma)\quad\text{ in
distribution},$$ where $\mathcal N(0,\sigma)$ is the normal
distribution with mean $0$ and variance $\sigma^2$, and
$$\sigma^2=\lim_{n\to\infty}\frac{1}{n}
\int\left(\sum_{i=0}^{n-1}\phi\circ g^i\right)^2d\nu.$$
 If $\sigma>0$,
this means that for every interval $J\subset\mathbb R,$
$$\nu\left\{x\in X\colon
\frac{1}{\sqrt{n}}\sum_{i=0}^{n-1}\phi(g^i(x))\in J\right\} \
\longrightarrow \
\frac{1}{\sigma\sqrt{2\pi}}\int_Je^{-\frac{t^2}{2\sigma^2}}dt.$$
It is known (see e.g. \cite{PP,Y2}) that $\sigma>0$ if and only if
$\phi$ is {\it not coboundary}, that is, the cohomological
equation
\begin{equation*}
\phi=\psi\circ g-\psi\end{equation*} has no solution in
$L^2(\nu)$. Otherwise, $\phi$ is called {\it coboundary}. For
dynamical systems satisfying Central Limit Theorem, determining
the largest possible classes of functions which are not coboundary
is an intricate problem, even for Axiom A systems \cite{PP}. For
countable Markov maps on intervals, Morita \cite{Mo} obtained
Central Limit Theorem for a broad class of functions including
those with bounded variation, and proved that there exists no
non-trivial function which is coboundary. Our construction in
Theorem A and the nature of the singularities allow us to show
that, for our maps, there exists no non-trivial H\"older
continuous function which is coboundary.
\begin{theoremb}
Let $(f,\mu)$ be as above. If a H\"older continuous $\phi\colon
S^1\to\mathbb R$ with $\int\phi d\mu=0$ is coboundary, then
$\phi\equiv 0$.
\end{theoremb}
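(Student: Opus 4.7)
Suppose $\phi\colon S^1\to\mathbb R$ is H\"older continuous with $\int\phi\,d\mu=0$ and $\phi=\psi\circ f-\psi$ holds $\mu$-almost everywhere for some $\psi\in L^2(\mu)$; the goal is to prove $\phi\equiv 0$. My plan has two ingredients: a Liv\v{s}ic-type regularity upgrade for $\psi$, and a soft density argument that uses the infinite wrapping of $f$ near a logarithmic singularity to force $\psi$ to be constant.

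\textbf{Step 1: regularity.} The first task is to replace $\psi$ by a H\"older continuous representative on $S^1\setminus C$, where $C$ denotes the set of critical and singular points. I would run the standard Liv\v{s}ic argument for nonuniformly expanding systems on the Markov/Young-tower structure underlying Theorem A: for $y_1,y_2$ in a common element of the induced Markov partition, $n$-step inverse branches produce $x_i^n$ with $f^n(x_i^n)=y_i$ and
\[
\psi(y_1)-\psi(y_2)=\bigl(\psi(x_1^n)-\psi(x_2^n)\bigr)+\sum_{j=0}^{n-1}\bigl(\phi(f^jx_1^n)-\phi(f^jx_2^n)\bigr).
\]
The exponential backward contraction on the tower together with the H\"older exponent of $\phi$ makes the sum converge geometrically in $|y_1-y_2|$, and (after a periodic reference-point trick to handle the $\psi(x_i^n)$ term in the limit) yields a H\"older representative of $\psi$ on a reference set; the identity $\psi=\psi\circ f-\phi$ then propagates the regularity to every point outside $C$.

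\textbf{Step 2: logarithmic singularities force $\psi$ constant.} Fix $c\in C$ with $\Phi(c)=0$. Since $f(x)=x+a+L\ln|\Phi(x)|$ and $L\ln|\Phi(x)|\to\pm\infty$ as $x\to c$, the restriction of $f$ to a one-sided deleted neighborhood of $c$ covers $S^1$ infinitely many times, with the successive branches of this cover shrinking geometrically toward $c$. Hence, for any $y_1,y_2\in S^1$, I can pick preimage sequences $x_1^n,x_2^n\to c$ with $f(x_i^n)=y_i$, chosen from the same branch, so that $|x_1^n-x_2^n|\to 0$. Writing $\phi(x_i^n)=\psi(y_i)-\psi(x_i^n)$ and using the H\"older continuity of $\phi$ and of the continuous representative of $\psi$ produced in Step 1,
\[
\psi(y_1)-\psi(y_2)=\bigl(\phi(x_1^n)-\phi(x_2^n)\bigr)+\bigl(\psi(x_1^n)-\psi(x_2^n)\bigr)\longrightarrow 0.
\]
Therefore $\psi$ is constant on $S^1$, and $\phi=\psi\circ f-\psi\equiv 0$.

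\textbf{Main obstacle.} Step 2 is the conceptually new ingredient and is precisely where the logarithmic---as opposed to, say, power-law---nature of the singularity is used; it runs cleanly once $\psi$ is known to be H\"older away from $C$. Consequently the main technical hurdle lies in Step 1: the frequent returns to neighborhoods of $C$ emphasized in the introduction make the Liv\v{s}ic-type pull-back estimates delicate, so the H\"older upgrade of $\psi$ must be carried out carefully on the Young tower from \cite{TW09}, using the hyperbolic-time machinery that also underlies the proof of Theorem A, in order to control the backward contraction uniformly up to $C$.
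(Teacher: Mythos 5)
Your proposal is correct and follows the same two-step outline as the paper: first upgrade $\psi$ to a (H\"older) continuous representative, then exploit the infinite geometric wrapping of a deleted neighborhood of a logarithmic singularity onto $S^1$ to force $\psi$ to be constant. The differences are mostly in the mechanism of Step~1. The paper does not run the Liv\v{s}ic pull-back with a reference-point trick that you sketch; instead it uses the martingale convergence theorem applied to the increasing filtration generated by inverse branches of the full-branch induced map $F=f^R$, a Borel--Cantelli selection of a subsequence $n_k$ along which $\psi(\bar h_{n_k}x)\to\psi(x_0)$ a.e., and the uniform contraction of inverse branches to conclude that $S(x)=\lim_k\sum_{i\le n_k}\phi(\bar h_i x)$ is H\"older on all of $S^1$, whence $\psi=\psi(x_0)+S$ is H\"older on all of $S^1$ (not merely $S^1\setminus C$). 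The paper in fact records in a footnote that Liv\v{s}ic regularity results \cite{BLS,G} yield the same conclusion, so your route is legitimate, but as written your ``periodic reference-point trick'' is the gap you yourself flag: for this family there is no a priori periodic-point structure to anchor the argument, and the details of controlling $\psi(x_i^n)$ in the limit are exactly what the martingale/Borel--Cantelli step supplies. In Step~2 your argument is a small variant of the paper's: you pick two preimages $x_1^n,x_2^n$ of arbitrary $y_1,y_2$ inside one shrinking branch and use uniform H\"older control of $\phi$ and $\psi$ near (but not at) the singularity, whereas the paper fixes $z,z'$ with $\psi(z)\neq\psi(z')$, takes $x_n\to y$, $x_n'\to y$ with $f(x_n)=z$, $f(x_n')=z'$, and evaluates $\psi(z)=\phi(x_n)+\psi(x_n)\to\phi(y)+\psi(y)$ using continuity at the singular point $y$ itself. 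Both variants deliver $\psi\equiv\text{const}$ and hence $\phi\equiv 0$; once the continuous version of $\psi$ on all of $S^1$ is in hand, either finish is immediate, so the real content is Step~1 and there the paper's martingale argument (or a citation to \cite{G}) should replace your sketch.
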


Our strategy in Theorem A is to construct an induced Markov map
and apply the scheme of Young \cite{Y2}. A key feature of this
construction is that the domain of the induced Markov map is a
full measure subset of $S^1$. In other words, $S^1$ is cut into
pieces, and each piece grows to the entire $S^1$ in a controlled
way. As a consequence, $\mu$ is equivalent to Lebesgue.

A proof of Theorem B is outlined as follows. Suppose that $\phi$
is coboundary, with an $L^2$ solution $\psi$. Using the induced
Markov map, it is possible to show that $\psi$ has a version
$\tilde\psi$ (i.e. $\psi=\tilde\psi$ $\mu$-a.e.) which is
(H\"older) continuous on the entire $S^1$. On the other hand, the
distinctive property of the logarithmic singularities is that, a
small neighborhood of a singular point is divided into a countable
number of intervals, and each of them is sent to the entire $S^1$
just by one iterate. This property allows us to rule out the
existence of nonconstant continuous solution of the cohomological
equation. Hence, $\tilde\psi$ has to be a constant function, and
$\phi\equiv0$ follows.

The rest of this paper consists of four sections. In Section 2 we
collect necessary materials in \cite{TW09} as far as we need them.
In Section 3 we perform a large deviation argument, a key step for
the construction of the induced Markov map. In Section 4 we put
these results together and construct an induced Markov map with
exponential tails, and prove the theorems. In Section 5 we prove
an entropy formula, connecting the metric entropy to the Lyapunov
exponent.
\section{Properties of nonuniformly expanding maps}
This section collects materials in \cite{TW09} as far as we need
them. Dynamical properties shared by maps corresponding to the
parameters in $\Delta(L)$ are stated in Section \ref{gs}.










\subsection{Elementary facts}\label{s2.1}
From this point on we use $L$ for both $L$ and $|L|$. We take $L$
as a base of the logarithmal function ${\rm log}(\cdot)$. For $f =
f_{a, L}$, let $C(f) = \{ f'(x) = 0 \}$ denote the set of critical
points and $S(f) = \{ \Phi(x) = 0 \}$ the set of singular points.
The distances from $x\in S^1$ to $C(f)$ and $S(f)$ are denoted by
$d_C(x)$ and $d_S(x)$ respectively. For $\varepsilon > 0$, we use
$C_{\varepsilon}$ and $S_\varepsilon$ to denote the
$\varepsilon$-neighborhoods of $C$ and $S$ respectively.

\begin{lemma}\label{derivative}
{\rm [\cite{TW09} Lemma 1.1.]} There exists $K_0>1$ and
$\varepsilon_0 > 0$, such that for all $L$ sufficiently large and
$f=f_{a,L}$,
\smallskip

\noindent{\rm (a)} for all $x \in S^1$,
$$ K_0^{-1}L
\frac{d_C(x)}{d_S(x)} \leq|f'x| \leq K_0L \frac{d_C(x)}{d_S(x)}, \
\ \ \ \ \  |f''x| \leq \frac{K_0L}{d^2_S(x)};
$$

\noindent{\rm (b)} for all $\varepsilon
>0$ and $x\not \in C_{\varepsilon}$,
$ |f'x|\geq K_0^{-1} L\varepsilon$; and

\noindent{\rm (c)} for all $x\in C_{\varepsilon_0}$, $K^{-1}_0 L <
|f''x| < K_0 L$.
\end{lemma}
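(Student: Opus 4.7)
The plan is to start from the explicit formula
\[
f'(x) = 1 + L\frac{\Phi'(x)}{\Phi(x)} = \frac{g(x)}{\Phi(x)},\qquad g(x):=\Phi(x) + L\Phi'(x),
\]
so that $C(f)$ coincides with the zero set of $g$. All the estimates will then follow from two one-variable comparisons with constants independent of $L$: namely $|\Phi(x)|\asymp d_S(x)$ globally, and $|g(x)|\asymp L\,d_C(x)$ for $L$ large. Part (a) is the ratio of these comparisons (for the $|f'|$ bound) together with a direct computation of $f''$; parts (b) and (c) are easy consequences.

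The first comparison is routine. Near each $s\in S(f)$, transversality gives $|\Phi'(s)|>0$, so Taylor's theorem combined with $\Phi\in C^2$ yields $|\Phi(x)|\asymp|x-s|$ on a uniform neighborhood of $s$; away from those neighborhoods both $|\Phi|$ and $d_S$ are bounded above and below on $S^1$ by compactness.

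The second comparison is where the hypothesis ``$L$ sufficiently large'' enters, and is the step I expect to be most delicate. Since $\Phi$ is Morse, its critical points are isolated with $\Phi''(c_0)\neq 0$; transversality of the zeros of $\Phi$ further forces $\Phi(c_0)\neq 0$, so $C(\Phi)$ and $S(\Phi)$ are uniformly separated. On a fixed-size neighborhood of each $c_0$, $g'(x)=\Phi'(x)+L\Phi''(x)$ has magnitude $\asymp L$ once $L$ is large, so the implicit function theorem locates a unique zero $c$ of $g$ near $c_0$, and the mean value theorem then gives $|g(x)|\asymp L|x-c|$ on that neighborhood. Outside the union of these neighborhoods, $|\Phi'|$ is bounded below by some $\delta>0$, whence $|g(x)|\geq L\delta-\max|\Phi|\asymp L$ for large $L$; this simultaneously rules out further zeros of $g$ and, since $d_C$ is bounded above by the diameter of $S^1$, supplies $|g(x)|\asymp L\,d_C(x)$ there as well. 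Piecing together the two regions yields the global comparison.

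With these in hand the lemma follows. The upper bound on $|f''|$ comes from $f''(x)=L[\Phi''(x)\Phi(x)-\Phi'(x)^2]/\Phi(x)^2$, whose numerator is bounded by $C^2$-smoothness of $\Phi$ and whose denominator satisfies $\Phi(x)^2\asymp d_S(x)^2$. For (b), $x\notin C_\varepsilon$ gives $d_C(x)\geq\varepsilon$, so combining (a) with the trivial upper bound on $d_S$ yields $|f'(x)|\geq K_0^{-1}L\varepsilon$. For (c), once $x$ is close to a critical point $c$ of $f$ (hence close to the corresponding $c_0\in C(\Phi)$ with $\Phi(c_0)\neq 0$), the dominant term in $f''(x)$ is $L\Phi''(c_0)/\Phi(c_0)$, of order $L$, while the correction $L(\Phi'(x)/\Phi(x))^2$ is of order $L\varepsilon_0^2$; choosing $\varepsilon_0$ small relative to the $\Phi$-data makes the correction negligible, giving the two-sided bound $K_0^{-1}L<|f''(x)|<K_0L$.
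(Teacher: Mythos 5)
Your proof is correct and follows exactly the route the paper sketches: the one-line hint ``$\Phi'\neq 0$ on $\{\Phi=0\}$, $\Phi''\neq 0$ on $\{\Phi'=0\}$'' is precisely your two comparisons $|\Phi(x)|\asymp d_S(x)$ and $|g(x)|\asymp L\,d_C(x)$, the latter obtained via the implicit function theorem locating the unique zero of $g=\Phi+L\Phi'$ at distance $O(1/L)$ from each $c_0\in C(\Phi)$. Nothing to add.
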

\noindent{\it Sketch of the proof.} Use the assumptions on
$\Phi(x)$: $\Phi'(x) \neq 0$ on $\{ \Phi(x) = 0 \}$; $\Phi''(x)
\neq 0$ on $\{ \Phi'(x) = 0 \}$. \qed
\subsection{Bounded distortion}
For $x\in S^1$, $n\geq1$, let
\begin{equation}\label{Theta}
D_n(x)=\frac{1}{\sqrt{L}}\cdot\left[\sum_{0\leq i<n}
d_i^{-1}(x)\right]^{-1} \ \ \ \ \text{where} \ \ \ \ \
d_i(x)=\frac{d_C(f^ix)\cdot d_S(f^ix)}{|(f^i)'x|},
\end{equation}
when they make sense.

\begin{lemma}\label{dist}
The following holds for all sufficiently large $L$: if $n\geq1$
and $x,fx,\cdots,f^{n-1}x\notin C\cup S$, then for all
$\xi,\eta\in[x-D_n(x),x+D_n(x)]$,
$$\frac{|(f^n)'\xi|}{|(f^n)'\eta|}\leq 2\text{ and }
\left|\frac{|(f^n)'\xi|}{|(f^n)'\eta|}-1\right|\leq
\frac{1}{L^{1/3}}\frac{|f^n\xi-f^n\eta|} {D_n(x)|(f^n)'x|}.$$
\end{lemma}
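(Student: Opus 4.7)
The plan is to telescope via the chain rule,
\[
\log\frac{|(f^n)'\xi|}{|(f^n)'\eta|}=\sum_{i=0}^{n-1}\bigl(\log|f'(f^i\xi)|-\log|f'(f^i\eta)|\bigr),
\]
bound each summand by the mean value theorem together with Lemma \ref{derivative} (which yields $|f''|/|f'|\leq K_0^2/(d_C\,d_S)$ away from $C\cup S$), and exploit that the definition of $D_n(x)$ is engineered precisely so that the resulting sum is $O(L^{-1/2})$.

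I would carry this out by a single joint induction on $i$, for $0\leq i\leq n$, on the interval $I:=[x-D_n(x),x+D_n(x)]$, with two coupled hypotheses: (a) $|(f^i)'\xi|/|(f^i)'\eta|\leq 2$ for all $\xi,\eta\in I$, and (b) $|f^i(I)|\leq\tfrac12\min\{d_C(f^ix),d_S(f^ix)\}$. Property (b) guarantees $f^i(I)\cap(C\cup S)=\emptyset$ and that the distances $d_C,d_S$ on $f^i(I)$ are within a factor $2$ of their values at $f^ix$, so Lemma \ref{derivative} yields $|f''|/|f'|\leq 4K_0^2/(d_C(f^ix)d_S(f^ix))$ on $f^i(I)$. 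The implication (a)$\Rightarrow$(b) is immediate: $|f^i(I)|\leq 4|(f^i)'x|D_n(x)=4\,d_C(f^ix)d_S(f^ix)\,D_n(x)/d_i(x)\leq 4L^{-1/2}\,d_C(f^ix)d_S(f^ix)$, since $D_n(x)/d_i(x)\leq L^{-1/2}$ is built into the definition of $D_n$.

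For the inductive step on (a) the mean value theorem together with the preceding bounds gives
\[
\Bigl|\log\frac{|(f^{i+1})'\xi|}{|(f^{i+1})'\eta|}\Bigr|\le\sum_{j=0}^{i}\frac{4K_0^2}{d_C(f^jx)d_S(f^jx)}|f^j\xi-f^j\eta|\le 16K_0^2\sum_{j=0}^{n-1}\frac{D_n(x)}{d_j(x)}=\frac{16K_0^2}{\sqrt L},
\]
where the middle inequality uses (a) at step $j$ to get $|f^j\xi-f^j\eta|\leq 4|(f^j)'x|D_n(x)$. For $L$ sufficiently large this is less than $\log 2$, delivering (a) at step $i+1$ and, by the paragraph above, (b).

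For the second, finer inequality I would rerun the same telescope, but bound $|f^j\xi-f^j\eta|$ in terms of $|f^n\xi-f^n\eta|$ rather than $|\xi-\eta|$: applying (a) to the iterates of $f^{n-j}$ based at $f^jx$ (which is inherited from (a) for $f^n$ via the chain rule) yields $|f^j\xi-f^j\eta|\leq 4|f^n\xi-f^n\eta|\cdot|(f^j)'x|/|(f^n)'x|$. Substitution gives $|\log|\leq 16K_0^2L^{-1/2}\cdot|f^n\xi-f^n\eta|/(|(f^n)'x|D_n(x))$, and $|e^t-1|\leq 2|t|$ for $|t|\leq\log 2$ upgrades this to the desired ratio estimate with prefactor $\leq L^{-1/3}$ once $L$ is large enough that $32K_0^2\leq L^{1/6}$. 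The main technical obstacle is the circular interdependence of (a) and (b)---MVT requires $f^j(I)\cap(C\cup S)=\emptyset$, which is (b), while (b) requires the distortion bound (a)---and the simultaneous induction is what resolves it.
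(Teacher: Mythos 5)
Your argument is correct and follows essentially the same route as the paper: bound $|f''|/|f'|$ by $K_0^2/(d_C d_S)$ from Lemma~\ref{derivative}, observe that $|f^iI|\lesssim L^{-1/2}d_C(f^ix)d_S(f^ix)$ so the distances change by at most a factor of $2$ along $f^iI$, and then sum using the definition of $D_n$; the only organizational difference is that you prove the preliminary ratio bound $\leq 2$ by the explicit joint induction on (a),(b) rather than citing it from \cite{TW09} as the paper does, which is a self-contained (and slightly more careful, e.g.\ in the final $|e^t-1|\leq 2|t|$ step) but mathematically identical argument.
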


\begin{remark}\label{south}
{\rm In Section 3 we will use these estimates on a bigger
 interval (comparable in length), but this does not seriously
affect the estimates.}
\end{remark}

\begin{proof}
The first estimate was in [\cite{TW09} Lemma 1.1]. We prove the
second one. Let $I$ denote the subinterval of
$[x-D_n(x),x+D_n(x)]$ with endpoints $\xi,\eta$. Let $i\in[0,n)$.
By Lemma \ref{derivative}, for any $\phi\in f^iI,$
$$\frac{|f''\phi|}{|f'\phi|}
\leq\frac{ K_0^2}{d_C(\phi)d_S(\phi)}\leq\frac{ 2K_0^2}
{d_C(f^ix)d_S(f^ix)},$$ where the last inequality follows from
$$|f^iI|\leq 2D_n(x)|(f^i)'x|\leq\frac{2}{\sqrt{L}} d_C(f^ix)\cdot
d_S(f^ix)\ll\max\{d_C(f^ix), d_S(f^ix)\}.$$ We also have
$$|f^iI|\leq
2|(f^i)'x||\xi-\eta|d_i(x)d_i^{-1}(x)= 2|\xi-\eta|d_C(f^ix)\cdot
d_S(f^ix)d_i^{-1}(x).$$ Multiplying these two inequalities,
\begin{equation*}
|f^iI|\sup_{\phi\in f^iI}\frac{|f''\phi|}{|f'\phi|}\leq
4K_0^2|\xi-\eta| d_i^{-1}(x).\end{equation*} Summing this over all
$0\leq i<n$ we obtain
\begin{align*}
{\rm ln} \frac{|(f^n)'\xi|}{|(f^n)'\eta|}&\leq\sum_{0\leq i<n}
{\rm ln} \frac{|f'(f^{i}\xi)|}{|f'(f^{i}\eta)|} \leq\sum_{0\leq
i<n}|f^iI|\sup_{\phi\in
f^iI}\frac{|f''\phi|}{|f'\phi|}\\
&\leq \frac{4K_0^2|\xi-\eta|}{\sqrt{L}D_n(x)} \leq \frac{8K_0^2
|f^n\xi-f^n\eta|}{\sqrt{L}D_n(x)|(f^n)'x|}\leq
\frac{1}{L^{1/3}}\frac{|f^n\xi-f^n\eta|} {D_n(x)|(f^n)'x|}.
\end{align*}
The desired inequality holds.
\end{proof}

\subsection{Uniform expansion outside of critical regions}\label{good}
Let $\lambda = 10^{-3}$, $\alpha = 10^{-6}$ and $\delta =
L^{-\alpha N_0}$, where $N_0$ is a large integer. Let $\sigma
=L^{-\frac16}$. For $c \in C(f)$, let $v_0 = f(c)$ and $\{ v_i=
f^i v_0, \ i \in {\mathbb Z}^+ \}$.

\begin{lemma}{\rm [\cite{TW09} Lemma 1.3.]}\label{outside}
There exists a large integer $N_0$ such that the following holds
for all sufficiently large $L$: assume for each $c\in C(f)$ and
every $0\leq n\leq N_0,$ $d_C(v_n)\geq\sigma$ and
$d_S(v_n)\geq\sigma$, then:
\smallskip

\noindent{\rm (a)} if $n\geq1$ and $x$, $fx,\cdots, f^{n-1}x\notin
C_{\delta}$, then $|(f^{n})'x|\geq \delta L^{2\lambda n}$;

\noindent{\rm (b)} if moreover $f^nx\in C_{\delta}$, then
$|(f^{n})'x| \geq L^{2 \lambda n}$.
\end{lemma}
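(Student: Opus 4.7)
The plan is to exploit the hypothesis that the critical orbits $v_0,v_1,\ldots,v_{N_0}$ stay outside $C_\sigma\cup S_\sigma$, using a bound-period argument in the spirit of Benedicks-Carleson. Since $\delta=L^{-\alpha N_0}$ can be so small that the one-step bound $|f'x|\geq K_0^{-1}L\delta$ from Lemma \ref{derivative}(b) falls below $L^{2\lambda}$, iterating this estimate alone is not enough for large $N_0$; deep returns into $C_{\sqrt{\sigma}}\setminus C_\delta$ must be handled by shadowing.

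I would partition $\{0,\ldots,n-1\}$ into \emph{free} times, where $f^ix\notin C_{\sqrt{\sigma}}$ so Lemma \ref{derivative}(b) supplies $|f'(f^ix)|\geq K_0^{-1}L^{1-1/12}\gg L^{2\lambda}$, and \emph{bound} times, where $f^ix\in C_{\sqrt{\sigma}}\setminus C_\delta$. At a bound time $i_0$, write $\rho=d_C(f^{i_0}x)\in[\delta,\sqrt{\sigma}]$ and let $c$ be the nearest critical point, $v_0=f(c)$. Taylor expansion together with Lemma \ref{derivative}(c) gives $|f(f^{i_0}x)-v_0|=O(L\rho^2)$. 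Applying Lemma \ref{dist} along the reference orbit $v_0,v_1,\ldots$ and using $d_C(v_k),d_S(v_k)\geq\sigma$, I would show that $f^{i_0+1+k}x$ stays within $\sigma/2$ of $v_k$ for $k\leq p(\rho)\asymp|\log\rho|/\log L$ iterates, and during this shadow Lemma \ref{derivative}(a) yields $|f'(f^{i_0+1+k}x)|\asymp|f'(v_k)|$. The divergence balance $|(f^{p(\rho)})'(v_0)|\cdot L\rho^2\asymp\sigma$ then forces the cumulative bound-period expansion to be of order $\sigma/\rho$; combined with the free-period contribution, this gives an averaged rate safely above $L^{2\lambda}$, using that $|f'(v_k)|\geq K_0^{-1}L\sigma$ by Lemma \ref{derivative}(a) applied to $v_k$.

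Concatenating free and bound contributions across $[0,n-1]$ yields $|(f^n)'x|\geq c_0L^{2\lambda n}$ for some constant $c_0$. For (a), the worst case is an orbit terminating inside an incomplete bound period launched at the minimal allowed depth $\delta$; a direct partial-period estimate then gives $c_0\asymp\delta$. For (b), the extra hypothesis $f^nx\in C_\delta$ means a fresh bound period begins at time $n$, so every earlier bound period completed and delivered its full gain, and a careful rebookkeeping absorbs the prefactor $\delta$ lost in (a), producing $c_0\asymp 1$. The principal obstacle will be exactly this endpoint bookkeeping -- distinguishing completed from incomplete bound periods under the standing constraint that no iterate visits $C_\delta$ -- together with propagating the distortion control of Lemma \ref{dist} across a concatenation of possibly many consecutive bound periods so that the resulting constants match $\delta L^{2\lambda n}$ in (a) and $L^{2\lambda n}$ in (b).
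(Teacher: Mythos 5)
Your high-level strategy---free iterates outside a threshold neighborhood of $C$ supply the expansion directly via Lemma~\ref{derivative}(b), while bound iterates inside it are handled by shadowing a critical orbit $v_0,v_1,\ldots$---is exactly what the paper's sketch does, using an auxiliary radius $\delta_0=L^{-11/12}$ to separate the two regimes. The gap in your proposal is the choice of threshold: $\sqrt{\sigma}=L^{-1/12}$ is far too large, and the shadowing step fails on most of the region you label ``bound.'' If $\rho=d_C(f^{i_0}x)$ is near $\sqrt\sigma$, the one-step displacement $|f(f^{i_0}x)-v_0|=O(L\rho^2)$ you compute is of order $L^{5/6}\gg1$, so there is no shadowing of $v_0$ at all. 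More precisely, invoking Lemma~\ref{dist} along the critical orbit requires $|f(f^{i_0}x)-v_0|\le D_p(v_0)$ for some $p\ge2$; from the definition (\ref{Theta}) and the standing hypothesis $d_C(v_0),d_S(v_0)\ge\sigma$, one has $D_1(v_0)\ge\sigma^2/\sqrt L=L^{-5/6}$, and imposing $K_0L\rho^2\le D_1(v_0)$ forces $\rho\lesssim L^{-11/12}$. That is exactly where $\delta_0$ sits, and it is why the paper's sketch uses $\delta_0=L^{-11/12}$ rather than any power of $\sigma$. Returns at depth $\rho\in(L^{-11/12},L^{-1/12}]$ launch no bound period and must simply be counted as free iterates, which is harmless since there $|f'|\gtrsim L\rho\ge L^{1/12}\gg L^{2\lambda}$ by Lemma~\ref{derivative}. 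Your accounting for (a) versus (b)---an incomplete terminal bound period costs a factor $\delta$, while a fresh return to $C_\delta$ at time $n$ restores it---is the right intuition. Replacing $\sqrt{\sigma}$ by $\delta_0=L^{-11/12}$ and using the explicit bound-period and recovery estimates of Section~\ref{recovery} (Lemma~\ref{reclem1}) in place of your heuristic $p(\rho)$ would bring the argument in line with the one the paper is sketching.
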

\noindent{\it Sketch of the proof.} Let
$\delta_0=L^{-\frac{11}{12}}\gg\delta$. By Lemma \ref{derivative},
derivatives grow exponentially, as long as orbits stay outside of
$C_{\delta_0}$. Once they fall in $C_{\delta_0}\setminus
C_\delta$, they copy the growth of the derivatives of the nearest
critical orbit for a certain period of time. The choice of
$\delta$ and the assumption on $C(f)$ together ensure that this
period is enough to recover an exponential growth. \qed
\medskip

\subsection{Dynamical assumptions}\label{gs}

For the rest of this paper, we assume that $N_0,L$ are large so
that the conclusions of the previous three lemmas hold. In
addition, for each $c \in C(f)$ we assume:
\smallskip

\noindent{\rm (a)} for $0 \leq n \leq N_0$, $d_C(v_n)>\sigma, \
d_S(v_n)
> \sigma$;

\noindent{\rm (b)} for every $n > N_0$,

\ \ \ {$(G1)$} $|(f^{j-i})'v_i|\geq L \min\{\sigma, L^{-\alpha
i}\}\text{ for every }0\leq i<j\leq n+1;$

\ \ \ {$(G2)$}  $|(f^i)'v_0|\geq L^{\lambda i}$ for every $0<i\leq
n+1$;

\ \ \ {$(G3)$} $d_S(v_i)\geq L^{-4\alpha i}$ for every $N_0\leq
i\leq n$.
\smallskip

Building on these standing assumptions, we construct an induced
Markov map and deduce the properties in the theorems. It was
proved in \cite{TW09} that there exist a large integer $N_0$ and
$L_0>0$ such that for all $L\geq L_0$, there exists a set
$\Delta(L) \subset [0,1)$ of the parameter $a$ with positive
Lebesgue measure, such that (a) (b) hold for all $f\in\{f_{a,
L}\colon a\in\Delta(L)\}$. In addition,
$\lim_{L\to\infty}|\Delta(L)|\to1$ holds.

\subsection{Recovering expansion}\label{recovery}
Let us introduce bound periods and recovery estimates from small
derivatives near the critical set. Let $c\in C$ and $v_0 = f(c)$.
For $p\geq2$, let
$$
I_{p}(c)=\left(c+ \sqrt{D_{p}(v_0)/(K_0L)},c+ \sqrt{D_{
p-1}(v_0)/(K_0L)}\right].
$$
Let $I_{-p}(c)$ be the mirror image of $I_{p}(c)$ with respect to
$c$.

If $x\in I_p(c) \cup I_{-p}(c)$, then $ |f x - v_0| \leq
D_{p-1}(v_0)$ holds. According to Lemma \ref{dist}, the
derivatives along the orbit of $fx$ shadow that of the orbit of
$v_0$ for $p-1$ iterates. We regard the orbit of $x$ as bound to
the orbit of $c$ up to time $p$, and call $p$ the {\it bound
period} of $x$ to $c$.

\begin{lemma}\label{reclem1}{\rm [\cite{TW09} Lemma 1.6.]}
 For every $p\geq 2$ and $x\in
I_{p}(c)\cup I_{- p}(c)$,
\smallskip

\noindent{\rm (a)} $p\leq\log |c-x|^{-\frac{2}{\lambda}}$;

\noindent{\rm (b)} if $x\in C_\delta$, then $|(f^{p})'x|\geq
\max\left\{|c-x|^{-1+\frac{16\alpha}{\lambda}},L^{\frac{\lambda}{3}p}\right\}$.
\end{lemma}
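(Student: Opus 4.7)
The strategy is to translate both claims into estimates on the geometry of $I_p(c)$. Namely, $x\in I_p(c)\cup I_{-p}(c)$ gives the two-sided control $\sqrt{D_p(v_0)/(K_0 L)}<|c-x|\leq\sqrt{D_{p-1}(v_0)/(K_0 L)}$, and these bounds, combined with the shadowing provided by Lemma \ref{dist} and the critical-orbit expansion from (G2), will yield both parts.

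For (a) it suffices to upper-bound $D_{p-1}(v_0)$. Since $D_n(v_0)^{-1}=\sqrt{L}\sum_{0\leq i<n}d_i^{-1}(v_0)$ is the reciprocal of a sum of positive terms, one has $D_{p-1}(v_0)\leq d_j(v_0)/\sqrt{L}$ for every $0\leq j<p-1$. Taking $j=p-2$ and using (G2) together with the trivial bounds $d_C,d_S\leq 1$ gives $d_{p-2}(v_0)\leq L^{-\lambda(p-2)}$. Substituting into $|c-x|\leq\sqrt{D_{p-1}(v_0)/(K_0 L)}$ and taking $\log_L$ yields $p\leq\log|c-x|^{-2/\lambda}$; the small-$p$ case is trivial.

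For (b) I decompose $|(f^p)'x|=|f'x|\cdot|(f^{p-1})'(fx)|$ and bound each factor. Because $x\in C_\delta\subset C_{\varepsilon_0}$ and $f'(c)=0$, Lemma \ref{derivative}(c) gives $|f'x|\geq c_1 L|c-x|$ (using that $d_S(c)$ is bounded away from zero). Because $|fx-v_0|\leq D_{p-1}(v_0)$, Lemma \ref{dist} yields $|(f^{p-1})'(fx)|\geq\tfrac12|(f^{p-1})'v_0|$, and (G2) supplies $|(f^{p-1})'v_0|\geq L^{\lambda(p-1)}$. Combining,
$$|(f^p)'x|\geq\tfrac{c_1}{2}L^{1+\lambda(p-1)}|c-x|.$$
For the bound $|(f^p)'x|\geq L^{\lambda p/3}$, I substitute the lower estimate $|c-x|^2\geq D_p(v_0)/(K_0 L)$ and lower-bound $D_p(v_0)$ by factoring $|(f^i)'v_0|=|(f^{p-1})'v_0|/|(f^{p-1-i})'v_i|$ in each $d_i^{-1}(v_0)$ and applying (G1) on the tail segment and (G3) (with its $d_C$-analogue from the standing assumptions) to control $d_C(v_i)d_S(v_i)$ from below by $L^{-O(\alpha i)}$. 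This shows $D_p(v_0)|(f^{p-1})'v_0|^2\gtrsim L^{\lambda(p-1)-O(\alpha p)}$, which exceeds $L^{2\lambda p/3}$ since $\alpha\ll\lambda$. For the bound $|(f^p)'x|\geq|c-x|^{-1+16\alpha/\lambda}$, I rewrite the required inequality as $L^{1+\lambda(p-1)}|c-x|^{2-16\alpha/\lambda}\gtrsim 1$ and use the same estimate on $|c-x|^2$; the specific coefficient $16\alpha/\lambda$ is precisely the slack needed to absorb the $L^{-O(\alpha p)}$ losses incurred in controlling $D_p(v_0)$.

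The main obstacle is the lower bound on $D_p(v_0)$, equivalently the upper bound on $\sum_{i<p}d_i^{-1}(v_0)$. This requires careful bookkeeping that uses (G1) to convert $|(f^i)'v_0|$ into $|(f^{p-1})'v_0|$ at a cost $L^{O(\alpha p)}$, and (G3) to keep $d_S(v_i)$ away from zero, so that the cumulative $\alpha$-losses stay comfortably below the $\lambda$-expansion given by (G2); the hypothesis $\alpha\ll\lambda$ baked into the choices $\lambda=10^{-3}$, $\alpha=10^{-6}$ is exactly what makes both inequalities quantitative.
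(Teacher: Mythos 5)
The outline for (a) is correct and follows the paper's own sketch: use $D_{p-1}(v_0)\leq d_{p-2}(v_0)/\sqrt{L}\leq L^{-\lambda(p-2)-1/2}$ via (G2), combine with $|c-x|^2\leq D_{p-1}(v_0)/(K_0L)$, take $\log_L$, and absorb the additive constant $\frac{3}{2\lambda}+\cdots$ (which is of size $\approx 10^3$, hence $\gg 2$).

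For (b) the bookkeeping has a genuine gap. When you factor $|(f^i)'v_0|=|(f^{p-1})'v_0|/|(f^{p-1-i})'v_i|$ and use (G1), (G3) plus the $d_C$ bound derived from Lemma~\ref{derivative}(a) together with (G1) at $j=i+1$, what you actually obtain is
\[
D_p(v_0)\,|(f^{p-1})'v_0|\;=\;\frac{1}{\sqrt{L}}\Bigl[\sum_{i<p}\tfrac{1}{|(f^{p-1-i})'v_i|\,d_C(v_i)d_S(v_i)}\Bigr]^{-1}\;\gtrsim\; L^{-\frac12 - O(\alpha p)},
\]
where the $L^{-1/2}$ comes from the $1/\sqrt{L}$ normalization built into $D_n$, and additional $L^{-1/3}$-size factors arise from $d_C(v_i)d_S(v_i)\geq\sigma^2=L^{-1/3}$ for $i<N_0$. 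These losses are \emph{not} of the form $L^{-O(\alpha p)}$; they are of fixed polynomial size in $L$, independent of $p$. Consequently the inequality you assert, $D_p(v_0)|(f^{p-1})'v_0|^2\gtrsim L^{\lambda(p-1)-O(\alpha p)}$, is missing a $-\frac12$ in the exponent, and the conclusion that it ``exceeds $L^{2\lambda p/3}$'' does not follow for small $p$ (e.g.\ $p=2$: the right side is $L^{6.7\times10^{-4}}$ but your lower bound has a $-\frac12$ in the exponent that cannot be recovered by the $p$-dependent gain). For the $L^{\lambda p/3}$ bound this can still be salvaged because the factor $|f'x|\geq cL|c-x|\geq c\sqrt{LD_p(v_0)/K_0}$ contributes an extra explicit $L^{1/4}$ in $|(f^p)'x|\gtrsim\sqrt{L}\,\bigl(D_p(v_0)|(f^{p-1})'v_0|\bigr)^{1/2}|(f^{p-1})'v_0|^{1/2}\gtrsim L^{1/4+\lambda(p-1)/2-O(\alpha p)}$ — but you need to state this combination explicitly; bounding $D_p(v_0)|(f^{p-1})'v_0|^2$ alone is not sufficient.

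The more serious problem is the $|c-x|^{-1+16\alpha/\lambda}$ bound. The estimate reduces to $|(f^p)'x|\,|c-x|\geq |c-x|^{16\alpha/\lambda}$. Your decomposition gives $|(f^p)'x|\,|c-x|\gtrsim D_p(v_0)|(f^{p-1})'v_0|\gtrsim L^{-\frac12-O(\alpha p)}$, while (a) gives $|c-x|^{16\alpha/\lambda}\approx L^{-8\alpha p}$ up to lower-order corrections. Since $\frac12+O(\alpha p)>8\alpha p$ for every $p$, the needed inequality $L^{-\frac12-O(\alpha p)}\geq L^{-8\alpha p}$ fails outright, and the $16\alpha/\lambda$-slack does \emph{not} absorb the $L^{-1/2}$-size loss — only $L^{-O(\alpha p)}$-size losses. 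So as written the proposal does not prove the first term of the maximum in (b); this estimate requires a more refined argument that cancels the $1/\sqrt{L}$ normalization rather than treating it as a loss to be absorbed.
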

\noindent{\it Sketch of the proof.} (a) follows from the
definition of $D_p(v_0)$ and the assumption (G2) on $v_0$. The
bounded distortion of $f^{p-1}$ on $f(I_p(c)\cup I_{-p}(c))$ and
(G1), (G3) are used to prove (b). \qed

\subsection{Decomposition into bound/free segments}\label{decomp}
We introduce a useful language along the way. Let $x\in
S^1\setminus (C\cup S)$. Let $$0\leq n_1<n_1+p_1 \leq n_2<n_2+p_2
\leq \cdots$$ be defined as follows: $n_1$ is the smallest $j\geq
0$ such that $f^jx\in C_\delta$, and called {\it the first return
time of $x$} (even if it is $0$). Given $n_k$ with $f^{n_k}x\in
C_\delta$, $p_k$ is the bound period and $n_{k+1}$ is the smallest
$j\geq n_k+p_k$ such that $f^jx\in C_\delta$. This decompose the
orbit of $x$ into bound segments corresponding to time intervals
$(n_k,n_k+p_k)$ and free segments corresponding to time intervals
$[n_k+p_k,n_{k+1}]$. The times $n_k$ are called {\it free return
times}.

\subsection{A few estimates}\label{few}
We quote from \cite{TW09} some technical estimates which will be
used in Section 3. Let $x\in S^1\setminus (C\cup S)$ make a free
return at $\nu>0.$ Let $0\leq n_1<n_2<\cdots<n_t<\nu$ denote all
the free returns before $\nu.$ Let $p_1,p_2,\cdots,p_t$ denote the
corresponding bound periods. For each $k\in[1,t]$, let
$$
\Theta_{k}(x)=\sum_{i=n_k}^{n_k+p_k-1}d_i^{-1}(x) \ \ \ \text{ and
} \ \ \ \Theta_{0}(x)=\sum_{i=0}^{\nu-1}d_i^{-1}(x)
-\sum_{k=1}^{t} \Theta_{k}(x).
$$
The quantity $\Theta_{k}$ is the contribution of the bound segment
from $n_k$ to $n_k + p_k-1$ to the total distortion and $\Theta_0$
is the contribution of all free segments to the total distortion.
It is understood that if $\nu$ is the first return time to
$C_\delta$, then the second summand in the definition of
$\Theta_0(x)$ is $0$.

The following two estimates were obtained in the proof of {\rm
[\cite{TW09} Lemma 1.8.]}, when $x$ is a critical value. It is not
hard to see that, the same estimates hold for a general $x$:
\begin{equation} \label{sublem2}
|(f^{n_k+p_k})'x|^{-1} \Theta_{k}(x) \leq
|d_C(f^{n_k}x)|^{-\frac{18\alpha}{\lambda}};
\end{equation}
\begin{equation}\label{exp0}
|(f^{\nu})'x|^{-1} \Theta_0(x) < \frac{1}{\delta^{\frac{1}{3}}}.
\end{equation}

\begin{definition}
{\rm Let $x\in S^1\setminus (C\cup S)$. We say $\nu$ is a {\it
deep return time} of $x$ if it is the first return time of $x$ to
$C_\delta$, or else, for every free return $n_k<\nu$,
$1\leq k\leq t$, 
\begin{equation}\label{inessential}
2\log d_C(f^{\nu}x)+\sum_{n_j\in(n_k,n_t]}2\log d_C(f^{n_j}x)\leq
\log d_C(f^{n_k}x).
\end{equation}

We say $\nu$ is a {\it shallow return time} of $x$ if it is not a
deep return time.}
\end{definition}

\begin{lemma}\label{exp}
Let $\nu>0$ be a deep return time of  $x\in S^1\setminus (C\cup
S)$. Then
$$ |(f^{\nu})'x| \cdot D_{\nu}(x) \geq \sqrt{d_C(f^{\nu}x)}.$$
\end{lemma}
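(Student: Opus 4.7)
The plan is to show $\sum_{i<\nu} d_i^{-1}(x)/|(f^\nu)'x| \leq 1/\sqrt{L\,d_C(f^\nu x)}$, from which the lemma follows upon multiplying by $|(f^\nu)'x|/\sqrt{L}$. I will separate the free-segment contribution $\Theta_0$ from the bound-segment contributions $\Theta_k$ and treat each by a different mechanism.

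For $\Theta_0$, the estimate (\ref{exp0}) gives $\Theta_0(x)/|(f^\nu)'x| < \delta^{-1/3}$ directly. Since $f^\nu x\in C_\delta$ means $d_C(f^\nu x)\leq\delta=L^{-\alpha N_0}$, one has $1/\sqrt{L\,d_C(f^\nu x)}\geq L^{(\alpha N_0-1)/2}$, so $\delta^{-1/3}=L^{\alpha N_0/3}$ is dominated by half of $1/\sqrt{L\,d_C(f^\nu x)}$ once $N_0\geq 3/\alpha$, which is ensured by the standing assumption.

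For the bound contributions, the core calculation combines (\ref{sublem2}) with a lower bound on $|(f^{\nu-n_k-p_k})'(f^{n_k+p_k}x)|$. Decomposing the interval $[n_k+p_k,\nu]$ into the intervening bound periods $[n_j,n_j+p_j]$ for $j=k+1,\ldots,t$ and the free segments between them, Lemma \ref{reclem1}(b) gives each bound period a factor at least $d_C(f^{n_j}x)^{-1+16\alpha/\lambda}$, while free segments contribute factors $\geq 1$. Together with (\ref{sublem2}) this yields
$$\frac{\Theta_k(x)}{|(f^\nu)'x|}\leq d_C(f^{n_k}x)^{-18\alpha/\lambda}\prod_{j=k+1}^{t} d_C(f^{n_j}x)^{1-16\alpha/\lambda}.$$
Setting $A_j=-\log d_C(f^{n_j}x)$ and $A_\nu=-\log d_C(f^\nu x)$, the deep-return hypothesis (\ref{inessential}) becomes $A_k\leq 2A_\nu+2\sum_{j>k}A_j$. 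Substituting, the logarithm of the displayed bound is $\leq (36\alpha/\lambda)A_\nu-(1-52\alpha/\lambda)\sum_{j>k}A_j$, i.e.
$$\frac{\Theta_k(x)}{|(f^\nu)'x|}\leq d_C(f^\nu x)^{-36\alpha/\lambda}\prod_{j>k}d_C(f^{n_j}x)^{\,1-52\alpha/\lambda}.$$

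Summing over $k$ gives a geometric series in $\delta^{1-52\alpha/\lambda}$ (since each $d_C(f^{n_j}x)\leq\delta$ and $1-52\alpha/\lambda>0$), bounded by $2d_C(f^\nu x)^{-36\alpha/\lambda}$ for $L$ large. The inequality $2d_C(f^\nu x)^{-36\alpha/\lambda}\leq \tfrac{1}{2}(L\,d_C(f^\nu x))^{-1/2}$ reduces, via $d_C(f^\nu x)\leq\delta$ and $1/2-36\alpha/\lambda>0$, to $\alpha N_0(1/2-36\alpha/\lambda)\geq 1/2+\log_L 4$, again valid under the standing assumption on $N_0$. The first-return case is the same argument with $t=0$, requiring only the $\Theta_0$ step. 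The main subtlety is the algebraic manipulation in step where the deep-return inequality is used to trade the unfavorable exponent $-18\alpha/\lambda$ at $n_k$ for a small loss $d_C(f^\nu x)^{-36\alpha/\lambda}$ together with a gain $d_C(f^{n_j}x)^{1-52\alpha/\lambda}$ at each intermediate return; everything else is routine power counting in the parameters $\alpha,\lambda,N_0$.
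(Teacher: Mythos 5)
The paper itself does not supply a proof of Lemma~\ref{exp}: it is stated without argument (the surrounding text only explains its geometric meaning) and is presumably carried over from \cite{TW09}, like the neighbouring estimates. So there is no ``paper proof'' to compare against. That said, your argument is correct and is the natural one given the material the paper does quote. You reduce the claim to $\sum_{i<\nu}d_i^{-1}(x)/|(f^\nu)'x|\le (L\,d_C(f^\nu x))^{-1/2}$, split the sum as $\Theta_0+\sum_k\Theta_k$, dispatch $\Theta_0$ with (\ref{exp0}), and for each $\Theta_k$ combine (\ref{sublem2}) with a lower bound on $|(f^{\nu-n_k-p_k})'(f^{n_k+p_k}x)|$ obtained by chaining Lemma~\ref{reclem1}(b) over the bound periods $[n_j,n_j+p_j]$, $j>k$, and Lemma~\ref{outside}(b) over the free segments (each of which ends in $C_\delta$, so the expansion factor is $\ge1$). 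The algebra that turns (\ref{inessential}) --- i.e.\ $A_k\le 2A_\nu+2\sum_{j>k}A_j$ with $A_j=-\log_L d_C(f^{n_j}x)$ --- into the bound $d_C(f^\nu x)^{-36\alpha/\lambda}\prod_{j>k}d_C(f^{n_j}x)^{1-52\alpha/\lambda}$ is right, and with the paper's $\alpha=10^{-6}$, $\lambda=10^{-3}$ the exponents $1-52\alpha/\lambda$ and $\tfrac12-36\alpha/\lambda$ are indeed positive, so the geometric-series and power-counting steps go through. Two small remarks: (i) your conclusion needs $N_0\gtrsim 3/\alpha$ (and a second comparable bound from the $\Theta_k$ part); the paper never states such an inequality explicitly, but it is of the right form to be absorbed into the standing ``$N_0,L$ sufficiently large'' hypothesis, and it is wise that you flag it. (ii) In the free-segment step you should invoke Lemma~\ref{outside}(b), not just (a), since (a) alone gives $\delta L^{2\lambda n}$, which can be $<1$ for short segments; (b) applies because each free segment ends at a return to $C_\delta$ (including the last one, since $\nu$ is a free return). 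Aside from that, this is a clean and complete proof.
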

Lemma \ref{dist} gives a bounded distortion of $f^\nu$ on the
interval $[x-D_\nu(x),x+D_\nu(x)]$. Hence, Lemma \ref{exp} gives a
lower estimate of the length of the interval
$f^\nu([x-D_\nu(x),x+D_\nu(x)])$ in terms of the distance of
$f^{\nu}(x)$ to the critical set. It follows that this interval
contains a critical point to which $f^\nu(x)$ is bound.

The following estimate, obtained in the proof of [\cite{TW09}
Proposition 2.1], bounds a contribution from shallow returns by
that of deep returns.
\begin{lemma}\label{f4-s3.1}
Let $0\leq n_1<n_2<\cdots<n_t$ denote all the free return times of
$x\in S^1\setminus (C\cup S)$ up to time $n_t$. Then
\begin{equation*}
\sum_{n_1\leq n_j\leq n_t\colon\text{shallow return}} \log
d_C(f^{n_j}x)\geq \sum_{n_1\leq n_j\leq n_t\colon\text{deep
return}}\log d_C(f^{n_j}x).
\end{equation*}
\end{lemma}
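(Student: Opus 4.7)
The plan is to translate the statement into a purely combinatorial inequality on positive reals and settle it by a single charging argument. Setting $\ell_j:=-\log d_C(f^{n_j}x)>0$, both sides of the claimed inequality are non-positive, and it becomes $\sum_{j\in S}\ell_j\le\sum_{j\in D}\ell_j$, where $S$ and $D$ are the shallow and deep subsets of $\{1,\ldots,t\}$. Unwinding \eqref{inessential}, the index $n_m$ is shallow precisely when there exists $k<m$ with
$$\sum_{k<j\le m}\ell_j<\frac{\ell_k}{2}.$$

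For each $m\in S$ I would choose the smallest witness $k(m)$ and partition $S$ via the fibers $A(k):=\{m\in S:k(m)=k\}$. Setting $W:=\{k:A(k)\ne\emptyset\}$ and $m^{*}(k):=\max A(k)$, the shallow inequality applied at $m^{*}(k)$, combined with $A(k)\subset(k,m^{*}(k)]$ and positivity of the $\ell_j$, yields
$$\sum_{m\in A(k)}\ell_m\;\le\;\sum_{k<j\le m^{*}(k)}\ell_j\;<\;\frac{\ell_k}{2}.$$
Summing over $k\in W$ then gives $\sum_{m\in S}\ell_m<\tfrac12\sum_{k\in W}\ell_k$.

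The main obstacle is that a witness $n_k$ may itself be shallow, so $W$ is not generally contained in $D$ and a naive charging from shallow returns to deep returns fails. This is where the factor $1/2$ (ultimately the coefficient $2$ in the definition of deep return) supplies just enough slack to close the argument. Splitting $W=W_D\sqcup W_S$ with $W_D:=W\cap D$ and $W_S:=W\cap S$, the trivial bound $\sum_{k\in W_S}\ell_k\le\sum_{m\in S}\ell_m$ gives
$$\sum_{m\in S}\ell_m\;<\;\tfrac12\sum_{k\in W_D}\ell_k+\tfrac12\sum_{m\in S}\ell_m,$$
which rearranges to $\sum_{m\in S}\ell_m<\sum_{k\in W_D}\ell_k\le\sum_{k\in D}\ell_k$, the desired inequality (trivially true when $S=\emptyset$).
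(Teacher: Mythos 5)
Your argument is correct, and I verified it step by step. The paper itself does not reproduce a proof of this lemma (it only cites the proof of Proposition 2.1 in \cite{TW09}), so a direct comparison with ``the paper's own proof'' is not possible; but your charging argument is a clean, self-contained derivation from the definition of deep return. In particular, the translation is right: with $\ell_j=-\log d_C(f^{n_j}x)>0$, the negation of \eqref{inessential} says that $n_m$ ($m\geq2$) is shallow exactly when some $k<m$ satisfies $\sum_{k<j\leq m}\ell_j<\ell_k/2$, and the first return is deep by fiat. Assigning to each shallow $m$ its smallest witness $k(m)$ gives a genuine partition of $S$ into fibers $A(k)$, and applying the witness inequality only at $m^*(k)=\max A(k)$ uses positivity of the $\ell_j$ exactly where needed. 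The one potential pitfall --- that a witness $k$ may itself be shallow --- is the crux, and you close it correctly: the inequality $\sum_{S}\ell_m<\tfrac12\sum_{W_D}\ell_k+\tfrac12\sum_{W_S}\ell_k$ combined with $W_S\subset S$ absorbs the shallow witnesses into the left side, and the factor $2$ in \eqref{inessential} is precisely what makes the coefficient $\tfrac12$ small enough for this absorption to leave a nonnegative remainder. The edge case $S=\emptyset$ is trivial, and if $S\neq\emptyset$ the conclusion $\sum_S\ell_m<\sum_{W_D}\ell_k$ forces $W_D\neq\emptyset$, so no vacuity issues arise.
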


\section{Inducing to a large scale}
Let
\begin{equation}\label{la}
M_0:=\left[\frac{2}{\lambda}\log(1/\delta)\right]=\left[
\frac{2\alpha N_0}{\lambda}\right],\end{equation} where the square
bracket denotes the integer part. Let $|$ $\cdot$ $|$ denote the
one-dimensional Lebesgue measure. In this section we prove
\begin{prop}\label{escape} For an arbitrary interval $I$ with
$\frac{\delta}{10}\leq|I|\leq \delta$, there exists a countable
partition $\mathcal P$ of $I$ into intervals and a stopping time
function $S\colon\mathcal P\to\{n\in\mathbb N\colon n\geq M_0\}$
such that:
\smallskip

\noindent{\rm (a)} for each  $\omega\in\mathcal P$,
$|f^{S(\omega)}\omega|\geq\sqrt{\delta}$ and
$|(f^{S(\omega)})'|\omega|\geq1/\delta^{\frac{1}{3}}>1$;

\noindent{\rm (b)} the distortion of $f^{S(\omega)}|\omega$ is
uniformly bounded. More precisely, for all $x,y\in\omega$,
\begin{equation*}\label{diz}
\left|\frac{|(f^{S(\omega)})'x|}{|(f^{S(\omega)})'y|}-1\right|
\leq
\frac{1}{\sqrt{\delta}}|f^{S(\omega)}x-f^{S(\omega)}y|;\end{equation*}

\noindent{\rm (c)}$|\{S\geq n\}|\leq \delta^{\frac{11}{12}}
L^{-\frac{\lambda n}{24}}$ holds for every $n>0$. Here, $\{S\geq
n\}$ is the union of all $\omega\in\mathcal P$ such that
$S(\omega)\geq n$.
\end{prop}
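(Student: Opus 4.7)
The plan is to build $\mathcal{P}$ and $S$ by an inductive refinement scheme modeled on the Benedicks--Carleson partitioning, iterating the dynamics on $I$ and stopping a piece $\omega$ at the first time $n \geq M_0$ at which $|f^n\omega| \geq \sqrt{\delta}$. Concretely, at each step $n$ I maintain a partition $\mathcal{P}_n$ of the still-unstopped part of $I$. For each $\omega \in \mathcal{P}_n$ that is currently free (not inside a bound period), I iterate one step and inspect $f^n \omega$: if it misses $C_\delta$, I keep $\omega$ intact; if it makes a free return (meets $C_\delta$ nontrivially), I cut $\omega$ along the preimages of the dyadic-like intervals $I_{\pm p}(c)$ from Section \ref{recovery}, assigning to each child piece its corresponding bound period, and then flow these children forward without further cutting for $p$ iterates. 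Pieces whose image lands outside $C_\delta$ stay free. A piece is declared stopped the first time its image has length $\geq \sqrt{\delta}$ after $M_0$; this is well-defined because the intervals $I_{\pm p}(c)$ have length decaying only polynomially in $p$ via $D_p(v_0)$, so the decomposition is countable and exhausts $I$ up to a measure-zero set of escaping orbits.

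The distortion statement (b) is an inductive invariant: at every stage I keep each $\omega \in \mathcal{P}_n$ contained in the Koebe window $[x - D_n(x), x + D_n(x)]$ for $x \in \omega$, so that Lemma \ref{dist} directly gives $||(f^n)'x|/|(f^n)'y| - 1| \leq L^{-1/3} |f^nx - f^ny|/(D_n(x) |(f^n)'x|)$. The definition of $I_{\pm p}(c)$ via $D_p(v_0)$ was engineered precisely so that this window condition survives each subdivision and each bound period. For the derivative and length claims in (a), I decompose the orbit from $0$ to $S(\omega)$ into alternating free and bound segments as in Section \ref{decomp}. On each free segment outside $C_\delta$, Lemma \ref{outside} gives expansion at rate $L^{2\lambda}$ per iterate; each bound segment of length $p_k$ contributes at least $\max\{d_C(f^{n_k}x)^{-1 + 16\alpha/\lambda}, L^{\lambda p_k / 3}\}$ by Lemma \ref{reclem1}(b). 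Combining these with the assumption $S(\omega) \geq M_0 = [(2\alpha N_0)/\lambda]$ and $\delta = L^{-\alpha N_0}$ produces the derivative bound $|(f^{S(\omega)})'| \geq \delta^{-1/3}$, and the length bound $|f^{S(\omega)}\omega| \geq \sqrt{\delta}$ is forced by the stopping criterion.

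The main obstacle is the tail estimate (c). For $S(\omega) \geq n$ to hold, the trajectory of $f^j\omega$ must fail to expand to scale $\sqrt{\delta}$ for $n$ steps, which, given the free-segment expansion rate $L^{2\lambda}$, forces an abnormally large accumulation of losses at free returns, i.e.\ $-\sum_{n_k \leq n} \log d_C(f^{n_k}x)$ must grow almost linearly in $n$. By Lemma \ref{f4-s3.1} this accumulated loss is controlled by the deep returns alone, and by Lemma \ref{exp} the measure of pieces making a prescribed sequence of deep returns of depths $r_1, r_2, \dots$ is controlled by $\prod_k \delta e^{-r_k}$ (with bounded-distortion constants absorbed). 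I would then run a Chernoff-type argument: estimate $\int_I e^{\beta R_n(x)} dx$, where $R_n$ is the total deep-return loss up to the $n$-th free return, by splitting over itineraries of deep returns and using the measure bound above, and optimize $\beta$ so that the expected loss per iterate is dominated by the gain $2\lambda$ from free segments. Tuning $\beta$ and $\alpha$ against $\lambda$ produces the exponent $\lambda/24$, while the $\delta^{11/12}$ prefactor comes from the first free return time (which, for a starting scale $|I| \leq \delta$, must itself exceed $\log(1/\delta)/\lambda$ by Lemma \ref{outside}). The delicate part is the combinatorial bookkeeping: one must simultaneously handle (i) pieces that never return, (ii) those in long bound periods, and (iii) those making nested shallow returns, ensuring the Chernoff exponential survives uniformly in $n$ and $I$.
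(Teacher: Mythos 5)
Your overall geometry (inductive partition, stopping at first large-scale time after $M_0$, controlling the tail via deep returns and Lemma \ref{f4-s3.1}) is the right shape, but the details diverge from the paper in ways that expose real gaps.

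First, the partition rule itself. You propose cutting only at free returns, along preimages of the intervals $I_{\pm p}(c)$, Benedicks--Carleson style. The paper cuts at \emph{every} step $n$ by Koebe windows $D_n(x)$, not only at returns, and has a separate Case II that refines $\omega$ whenever $f^{n-1}\omega$ meets $C\cup S$. Your scheme never says what to do when $f^{n-1}\omega$ straddles a singular point $s\in S$ (where $f$ is not even defined), which happens generically before a piece reaches large scale; this is not a cosmetic omission because the logarithmic singularities are exactly what makes the family delicate, and the paper's Case II exists to deal with it. With Koebe-window cutting the bounded-distortion property (b) falls out of Lemma \ref{dist} with no extra work, and the derivative bound in (a) is then trivial: $|(f^n)'x|\geq\frac12|f^n\omega|/|\omega|\geq\frac12\delta^{-1/2}>\delta^{-1/3}$. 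Your proposal to instead assemble the derivative bound by concatenating free-segment expansion (Lemma \ref{outside}) with bound-segment recovery (Lemma \ref{reclem1}) would work but is a detour you don't need once (a)'s length bound is in place.

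Second, and more seriously, the tail estimate. You gesture at a Chernoff-type argument optimizing $\int e^{\beta R_n}$, and you quote a per-itinerary measure bound ``$\prod_k \delta e^{-r_k}$.'' Neither is what the argument actually requires, and neither is established. The correct per-itinerary bound is $|P_n(\mathbf i)|\leq L^{-R/3}$ with $R=r_1+\cdots+r_q$ (Lemma \ref{pro-R1}), and its proof is a nontrivial nested covering construction (the families $I_k(x)$, Lemmas \ref{lem1}--\ref{lem2}, the special handling of $\nu_1=0$); you cannot get it just from Lemma \ref{exp} and ``bounded-distortion constants absorbed.'' The paper then avoids any exponential-moment machinery: it proves the deterministic lower bound $R\geq\lambda n/12$ for every unstoppped piece (Lemma \ref{space0} plus Lemma \ref{reclem1}(a) for bound-period lengths plus Lemma \ref{f4-s3.1} for shallow-versus-deep bookkeeping) and then sums $\binom{n}{q}\binom{R+q}{q}L^{-R/3}$ directly. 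Your Chernoff route would need you to verify that the moment generating function is finite and that the optimal $\beta$ beats the combinatorial entropy of itineraries — this is plausible but you have given no computation, and the constant $\lambda/24$ would not emerge without essentially reproving the same combinatorial bounds. Finally, the $\delta^{11/12}$ prefactor does not come solely from the first-return time being $\gtrsim\log(1/\delta)/\lambda$; it comes from $|I|\leq\delta$ combined with extending the estimate from $n\geq 2M_0$ down to all $n>0$ using the definition \eqref{la} of $M_0$.

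In short: the skeleton is recognizable, but the two load-bearing steps — the measure bound per itinerary, and the mechanism converting ``unstoppped for $n$ steps'' into a quantitative lower bound on accumulated deep-return depth — are either wrong as stated or left entirely unproved, and the partition is not well-defined across singularity crossings.
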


 In Section \ref{srb} we define and describe the combinatorics of
the partition $\mathcal P$ and the stopping time $S$. (a) (b)
follow from these definitions. In Section \ref{expon} we prove
(c), assuming some key estimates on the measure of a set with a
given combinatorics. In Section \ref{pro} we prove this key
estimate.

\subsection{Combinatorial structure}\label{srb}
For each $n\geq0$, considering $n$-iterates we construct a mod $0$
partition $\widehat{\mathcal P}_n$ of $I$. This construction is
designed so that: each element of $\mathcal P$ is an element of
some $\widehat{\mathcal P}_n$; $\omega\in\mathcal
P\cap\widehat{\mathcal P}_n$, if and only if $S(\omega)=n$.
\smallskip

 Let $\widehat{\mathcal
P}_0=\{I\}$, the trivial partition of $I.$ Let $n\geq 1$ and
$\omega\in\widehat{\mathcal P}_{n-1}$. Then $\widehat{\mathcal
P}_n|\omega$ is defined as follows:
\smallskip

\noindent{\it Case I: $f^{n-1}\omega$ does not meet $C\cup S$.} We
cut $\omega$ from the left to the right, so that each subinterval
has the form $[x,x+D_n(x)]$. If the rightmost interval does not
have this form, then we take it together with the adjacent
interval.
\smallskip

\noindent{\it Case II: $f^{n-1}\omega$ meets $C\cup S$.} Consider
a subinterval of $\omega$ whose $f^{n-1}$-image does not meet
$C\cup S$ in its interior. Let $\omega'$ denote any maximal
interval with this property. We cut the right half of $\omega'$
from the left to the right, as in Case I. We cut the left half of
$\omega'$ from the right to the left, analogously to Case I.
\smallskip

Let us record some basic properties of the partitions.
\smallskip

\noindent{\it {\rm (P1)} Non-triviality}. For every $n\geq M_0$,
$\widehat{\mathcal P}_{n}\neq\{I\}$ holds. Indeed, if $I\cap
C_\delta\neq\emptyset$, then $D_1(x)< d_S(x)/\sqrt{L}\ll\delta/10$
holds for $x\in I\cap C_\delta$, while $|I|\geq\delta/10$ by the
assumption. Hence $I$ is subdivided in the construction of
$\widehat{\mathcal P}_1$, that is $I\notin\widehat{\mathcal P}_1$.
If $I\cap C_\delta=\emptyset$, then by Lemma \ref{outside}, either
(i) there exists $n\leq M_0$ such that $f^iI\cap
C_\delta=\emptyset$ for every $0\leq i<n$ and $f^nI\cap
C_\delta\neq\emptyset$, or else (ii) there exists $n\leq M_0$ such
that $f^n(\omega)=S^1$, by the nature of the singularities. If (i)
holds, then the same reasoning to the first case gives
$I\notin\widehat{\mathcal P}_{n+1}$. If (ii) holds, then clearly
$I\notin\widehat{\mathcal P}_{n+1}$.

\smallskip

\noindent{\it {\rm (P2)} Bounded distortion.} By (P1), if $n\geq
M_0$, then for each $\omega\in\widehat{\mathcal P}_n$, $D_n(x)\leq
|\omega|\leq10D_{n}(x)$ holds for some $x\in\omega$. From Lemma
\ref{dist} (see Remark \ref{south}), the distortion of
$f^n|\omega$ is bounded.
\smallskip

\noindent{\it {\rm (P3)} Uniform expansion.} Let $n\geq M_0$,
$\omega\in\widehat{\mathcal P}_n$ and suppose that
$|f^n\omega|\geq\sqrt{\delta}$. From $|\omega|\leq\delta$ and the
second estimate in Lemma \ref{dist},
\begin{equation}\label{expa}
|(f^n)'x|\geq\frac{1}{2}\frac{|f^n\omega|}{|\omega|}\geq
\frac{1}{2\sqrt{\delta}}>\frac{1}{\delta^{\frac{1}{3}}}
\quad\forall x\in\omega.\end{equation}
\smallskip

\begin{definition}\label{escd}
{\rm Given $\omega_{n}\in\widehat{\mathcal P}_{n}$ and
$k\in[0,n)$, let $\omega_k$ denote the unique element of
$\widehat{\mathcal P}_{k}$ which contains $\omega_{n}$. Let $n\geq
M_0$. We say $\omega_{n}\in\widehat{\mathcal P}_{n}$ reaches a
large scale at time $n$ if
$$n=\min\left\{
i\in[M_0,n]\colon |f^i\omega_{i}|\geq\sqrt{\delta}\right\}.$$}
\end{definition}

Let $\mathcal P_n$ denote the collection of all elements of
$\widehat{\mathcal P}_n$ which reach a large scale at time $n$.
Let $\mathcal P=\bigcup_n\mathcal P_n$. Define a stopping time
function $S\colon \mathcal P \to\mathbb N$ by $S(\omega)=n$ for
each $\omega\in\mathcal P_n.$ Let $\{S\geq n\}$ denote the union
of all $\omega\in\mathcal P$ such that $S(\omega)\geq n$. Let
$$\mathcal P_n'=\left\{ \omega_n\in\widehat{\mathcal P}_n\colon  |f^i\omega_{i}|<\sqrt{\delta}
\quad M_0\leq \forall i \leq n\right\},$$ and let
$|P_n|=\sum_{\omega\in\mathcal P_n'}|\omega|$. To show that
$\mathcal P$ is a mod $0$ partition of $I$, it suffices to show
$|P_n|\to0$ as $n\to\infty$. Then, $|\{S\geq n\}|=|P_n|$ holds.
(a) follows from (P3). (b) follows from the second estimate in
Lemma \ref{dist}.
\subsection{Exponential tails}\label{expon}
To prove (c), we have to show that $|P_n|$ decays exponentially.
\begin{lemma}\label{escape2}
If $n\geq M_0$, $\omega\in\widehat{\mathcal P}_n$ and
$f^i(\omega)\cap C_\delta=\emptyset$ for every $0\leq i<n$, then
$|f^{n}\omega|\geq\sqrt{\delta}$.
\end{lemma}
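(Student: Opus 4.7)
My plan is to reduce $|f^n\omega|\geq\sqrt\delta$ to a lower estimate on $|(f^n)'x|\cdot D_n(x)$ for an appropriate $x\in\omega$, and then produce this estimate by showing that the sequence $(d_i^{-1}(x))_{i=0}^{n-1}$ is geometric with common ratio $\gg 1$, so that the sum in the definition of $D_n(x)$ is dominated by its last term.

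For the reduction, $n\geq M_0$ makes property (P2) available: there is some $x\in\omega$ with $|\omega|\geq D_n(x)$, and the bounded distortion of Lemma \ref{dist} (on the comparably-sized interval of Remark \ref{south}) gives $|f^n\omega|\geq\tfrac12|(f^n)'x|\cdot|\omega|\geq\tfrac12|(f^n)'x|\cdot D_n(x)$. Hence it suffices to show $|(f^n)'x|\cdot D_n(x)\geq 2\sqrt\delta$.

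For the geometric growth of $(d_i^{-1}(x))$, Lemma \ref{derivative}(a) applied to $|f'(f^ix)|$, together with the hypothesis $f^i\omega\cap C_\delta=\emptyset$ (which forces $d_C(f^ix)\geq\delta$) and the trivial bound $d_C\cdot d_S\leq 1/4$, yields
\[
\frac{d_{i+1}^{-1}(x)}{d_i^{-1}(x)}=|f'(f^ix)|\cdot\frac{d_C(f^ix)\,d_S(f^ix)}{d_C(f^{i+1}x)\,d_S(f^{i+1}x)}\geq\frac{K_0^{-1}L\,d_C(f^ix)^2}{d_C(f^{i+1}x)\,d_S(f^{i+1}x)}\geq 4K_0^{-1}L\delta^2,
\]
which exceeds $2$ for $L$ large. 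Therefore $\sum_{i=0}^{n-1}d_i^{-1}(x)\leq 2\,d_{n-1}^{-1}(x)$, so $D_n(x)\geq d_C(f^{n-1}x)\,d_S(f^{n-1}x)/(2\sqrt L\,|(f^{n-1})'x|)$. Multiplying by $|(f^n)'x|=|(f^{n-1})'x|\cdot|f'(f^{n-1}x)|$ and applying Lemma \ref{derivative}(a) once more, the $d_S(f^{n-1}x)$ factors cancel, leaving
\[
|(f^n)'x|\cdot D_n(x)\geq\frac{\sqrt L\,d_C(f^{n-1}x)^2}{2K_0}\geq\frac{\sqrt L\,\delta^2}{2K_0},
\]
which exceeds $2\sqrt\delta$ as soon as $L$ is large enough that $L\delta^3\geq (4K_0)^2$, a condition consistent with the standing assumptions on $L$ and $N_0$.

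The main obstacle, present in principle but blunted by the structure of the estimates, is that the hypothesis does not exclude passage of $f^i\omega$ near the singular set, so $d_S(f^ix)$ may be arbitrarily small along the orbit. Any factor of $1/d_S$ surviving in the final bound would be fatal. The argument works precisely because $d_S$ appears multiplicatively in $D_n(x)$ through $d_i(x)=d_C\cdot d_S/|(f^i)'x|$, and inversely in the lower bound $|f'|\gtrsim L\,d_C/d_S$ from Lemma \ref{derivative}(a); the two factors cancel in the product $|(f^n)'x|\cdot D_n(x)$, leaving an estimate controlled entirely by the hypothesis $d_C\geq\delta$.
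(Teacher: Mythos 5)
Your reduction to a lower bound on $|(f^n)'x|\cdot D_n(x)$ via (P2) and Lemma~\ref{dist} is the same as the paper's. The paper then produces that lower bound simply by invoking (\ref{exp0}): since the orbit never enters $C_\delta$ before time $n$, there are no bound segments, so $\Theta_0(x)=\sum_{i=0}^{n-1}d_i^{-1}(x)$ and (\ref{exp0}) yields $|(f^n)'x|^{-1}D_n(x)^{-1}\leq\sqrt{L}\,\delta^{-1/3}$. That inequality is a genuine consequence of the standing dynamical assumptions (G1)--(G3) through Lemma~\ref{outside} and [TW09]~Lemma~1.8, not something extractable from the pointwise derivative bounds of Lemma~\ref{derivative} alone.

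Your substitute --- that $(d_i^{-1}(x))_{i}$ is geometric with ratio $\geq2$ --- fails in the paper's parameter regime. You bound the ratio below by $4K_0^{-1}L\delta^2$ and assert that this exceeds $2$ for $L$ large; but with $\delta=L^{-\alpha N_0}$ and $\alpha N_0>11/12$ (forced by $\delta_0=L^{-11/12}\gg\delta$ in the sketch of Lemma~\ref{outside}), one has $L\delta^2=L^{1-2\alpha N_0}\to 0$ as $L\to\infty$. The dangerous situation is precisely the one you did not flag: an orbit sitting just outside $C_\delta$ but far from $S$ has $|f'|\approx L\,d_C/d_S$ of size roughly $L\delta$, while $d_C(f^ix)d_S(f^ix)/(d_C(f^{i+1}x)d_S(f^{i+1}x))$ can be of order $\delta$, making $d_{i+1}^{-1}/d_i^{-1}$ of order $L\delta^2\ll1$. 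Your concluding inequality $\sqrt{L}\,\delta^2/(2K_0)\geq 2\sqrt{\delta}$ has the same defect: it needs $L\delta^3\geq(4K_0)^2$, i.e.\ $\alpha N_0\leq 1/3+o(1)$, incompatible with the constraints on $N_0$.

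The structural point is that expansion along free segments is a nontrivial output of the analysis behind Lemma~\ref{outside} and (\ref{exp0}): orbits lingering in $C_{\delta_0}\setminus C_\delta$ temporarily lose expansion and only recover it by shadowing a critical orbit, and this recovery is visible at the level of the sum $\sum d_i^{-1}(x)$ compared to $|(f^n)'x|$, not at the level of individual ratios $d_{i+1}^{-1}/d_i^{-1}$. A per-step geometric growth argument from Lemma~\ref{derivative} cannot see this and does not close the proof.
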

\begin{proof}
(P1) gives $|\omega|\geq D_n(x)$ for some $x\in \omega.$
(\ref{exp0}) gives
$$|(f^n)'x|^{-1}|D_n(x)|^{-1}=
\sqrt{L}\cdot\sum_{i=0}^{n-1}|(f^{n})'x|^{-1}d_i(x)^{-1} \leq
\frac{\sqrt{L}}{\delta^{\frac{1}{3}}}.$$ Taking reciprocals and
then using the bounded distortion of $f^n|\omega$, we obtain the
inequality.
\end{proof}
To each $\omega_n\in\mathcal P_{n}'$ we assign an {\it itinerary}
$$\bold i=(\nu_1,r_1,c_{1}),
(\nu_2,r_2,c_{2}),\cdots,(\nu_q,r_q,c_{q})$$ which has the
following interpretation. Let $x_*$ denote the mid point of
$\omega_n.$ Then $0\leq\nu_1<\cdots<\nu_q< n$ are all the deep
returns of the orbit of $x_*$ before $n$; for each $i\in[1,q]$,
$f^{\nu_i}x_*$ is bound to $c_i\in C$ and $r_i$ is the unique
integer such that $|c_i-f^{\nu_i}x_*|\in(L^{-r_i},L^{-r_i+1}].$
Let $P_n(\bold i)$ denote the union of all elements of $\mathcal
P_{n}'$ with an itinerary $\bold i$.
Lemma \ref{escape2} gives $|\{S\geq n\}| =\sum_{\bold i}
|P_n(\bold i)|$, where the sum ranges over all feasible
itineraries.

\begin{lemma}\label{pro-R1}
$|P_n({\bf i})| < L^{-\frac{1}{3} R}$, where $R = r_1 + r_2 \cdots
+ r_q$.
\end{lemma}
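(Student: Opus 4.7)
I bound $|P_n(\mathbf i)|$ by induction on $i\in\{0,\ldots,q\}$, establishing
\[
|E_i|\le C^i L^{\,i-R_i/2}|I|,\qquad R_i=r_1+\cdots+r_i,
\]
where $E_i$ is the union of those $\omega\in\widehat{\mathcal P}_{\nu_i+1}$ that contain at least one $\omega_n\in P_n(\mathbf i)$, and $E_0:=I$. Since $P_n(\mathbf i)\subset E_q$, combining the case $i=q$ with $|I|\le\delta=L^{-\alpha N_0}$ and the bound $r_i>\alpha N_0$ (forced by $d_C(f^{\nu_i}x_*)\le\delta$) gives $q<R/(\alpha N_0)$, and a short case-split on whether $R\le 6\alpha N_0$ or $R>6\alpha N_0$ yields $|P_n(\mathbf i)|\le L^{-R/3}$ provided $N_0$ was taken large.

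\textbf{Inductive step.} Fix a relevant $\omega^\sharp\in\widehat{\mathcal P}_{\nu_i}$ sitting inside a relevant $\omega^*\in\widehat{\mathcal P}_{\nu_{i-1}+1}$, and let $x_*$ be the midpoint of an enclosed $\omega_n\in P_n(\mathbf i)$. Lemma \ref{exp} applies at $x_*$ (since $\nu_i$ is a deep return) and, via property (P1) ($|\omega^\sharp|\ge D_{\nu_i}(x_*)$) together with property (P2) plus Remark \ref{south} (bounded distortion of $f^{\nu_i}$ on $\omega^\sharp$), produces the expansion bound
\[
|f^{\nu_i}\omega^\sharp|\;\ge\;\tfrac12\sqrt{d_C(f^{\nu_i}x_*)}\;>\;\tfrac12\,L^{-r_i/2}.
\]

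At step $\nu_i+1$, Case II of the partition (which applies since $f^{\nu_i}\omega^\sharp$ engulfs $c_i$ once $r_i>2$) cuts $\omega^\sharp$ into pieces $\omega^{**}\in\widehat{\mathcal P}_{\nu_i+1}$ of $x$-width $\approx D_{\nu_i+1}(x_*)$. Reading $D_{\nu_i+1}$ from \eqref{Theta}, the freshly added term $d_{\nu_i}^{-1}\approx L^{r_i}|(f^{\nu_i})'x_*|$ dominates the sum $\sum_{j<\nu_i+1}d_j^{-1}$; this is precisely what the deep-return condition \eqref{inessential} and Lemma \ref{f4-s3.1} deliver, by bounding the accumulation of all prior free returns by that of the most recent deep one. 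Hence $|\omega^{**}|\approx L^{-r_i-1/2}/|(f^{\nu_i})'x_*|$ and $|f^{\nu_i}\omega^{**}|\approx L^{-r_i-1/2}$. Since the annulus $A_i=\{y:L^{-r_i}<|y-c_i|\le L^{-r_i+1}\}$ has measure $\le 2L^{-r_i+1}$, only $\lesssim L^{3/2}$ pieces $\omega^{**}\subset\omega^\sharp$ can have $f^{\nu_i}$-image meeting $A_i$. Adding their contributions,
\[
\sum_{\substack{\omega^{**}\in E_i\\ \omega^{**}\subset\omega^\sharp}}|\omega^{**}|\;\lesssim\; L^{3/2}\cdot\frac{L^{-r_i-1/2}}{|(f^{\nu_i})'x_*|}\;=\;\frac{L^{1-r_i}}{|(f^{\nu_i})'x_*|}\;\le\; L^{1-r_i/2}|\omega^\sharp|,
\]
the last step by the expansion bound. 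Summing first over $\omega^\sharp\subset\omega^*$ and then over $\omega^*\in E_{i-1}$ closes the induction.

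\textbf{Main obstacle.} The pivotal calculation is the dominance of $d_{\nu_i}^{-1}$ in the sum defining $D_{\nu_i+1}$; the technical weight of the argument lies in using \eqref{inessential} and Lemma \ref{f4-s3.1} to squeeze all prior-return contributions into terms controlled by the current $r_i$. Secondary technicalities are the passage between $D_{\nu_i}$-length intervals (on which Lemma \ref{dist} gives bounded distortion directly) and the comparable partition pieces, handled by Remark \ref{south}; the boundary case $\nu_1<M_0$ in which $\widehat{\mathcal P}_{\nu_1}$ is still trivial; and keeping the geometric factor $C^q$ harmless (which follows from $q<R/(\alpha N_0)$ and $L$ large, so $q\log_L C$ is absorbed).
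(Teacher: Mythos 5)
Your proposal is a genuinely different argument from the paper's. The paper works entirely outside the partitions $\widehat{\mathcal P}_n$: for each $k$ it constructs a Vitali-type pairwise-disjoint family of freely placed intervals $I_k(x)$ (of length $\le D_{\nu_k}(x)$, shrunk if necessary so $f^{\nu_k}$ is injective), proves the two nesting lemmas (Lemma \ref{lem1} and Lemma \ref{lem2}), and shows that $P_n(\mathbf i)$ sits inside the $L^{-r_k/3}$-shrinkage of each cover, so that $M_k\le L^{-r_{k-1}/3}M_{k-1}$ telescopes. You instead work directly with the sets $E_i$ of $\widehat{\mathcal P}_{\nu_i+1}$-elements meeting $P_n(\mathbf i)$ and control $|E_i|/|E_{i-1}|$ by counting pieces whose $f^{\nu_i}$-image meets the annulus $A_i$. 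Both strategies are natural, but the paper's version buys independence from the specific shape of $\widehat{\mathcal P}_{\nu_k}$, and that is precisely the thing your proof relies on.

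This dependence is the gap. To convert $1/|(f^{\nu_i})'x_*|$ into $L^{r_i/2}|\omega^\sharp|$ you need $|\omega^\sharp|\ge D_{\nu_i}(x_*)$. That estimate comes from (P2) — which you misattribute to (P1) — and (P2) is only available for $\nu_i\ge M_0$. For the first deep return $\nu_1$ this can fail: when $\nu_1<M_0$ the partition $\widehat{\mathcal P}_{\nu_1}$ may still equal $\{I\}$, and since the construction refrains from cutting exactly when $|\omega|\lesssim D_n(\cdot)$, the true relation is the \emph{opposite} one, $|I|\lesssim D_{\nu_1}(x_*)$. Then the expansion bound $|f^{\nu_1}\omega^\sharp|\ge\tfrac12\sqrt{d_C(f^{\nu_1}x_*)}$ is simply not available, and the claimed base estimate $|E_1|\le CL^{1-r_1/2}|I|$ is not just unproved but can fail: the preimage of $A_1$ in $I$ has measure $\approx L^{-r_1+1}/|(f^{\nu_1})'x_*|$, and $|(f^{\nu_1})'x_*|\,|I|\approx|f^{\nu_1}I|$ need not exceed $L^{-r_1/2}/C$ when $\nu_1$ and $r_1$ are near their minimal allowed values. (The paper treats $\nu_1=0$ as a genuinely separate case for a related reason, and its free intervals $I_1(x)$ make $0<\nu_1<M_0$ a non-issue.) You flag ``the boundary case $\nu_1<M_0$'' as a secondary technicality, but it is not a perturbation of your argument: the induction must be re-seeded with a direct bound such as $|E_1|\lesssim L^{-r_1+1}$ (which is indeed $\le L^{-r_1/3}$ once $r_1\ge 2$), and that is a different statement from $CL^{1-r_1/2}|I|$. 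As written the induction does not start for every admissible itinerary.

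Two smaller remarks: (1) The sizes of the pieces $\omega^{**}$ vary across $A_i$ (roughly $\propto d_C(f^{\nu_i}x)$), so the counting ``$\lesssim L^{3/2}$ pieces'' is not uniform; what actually controls $\sum|\omega^{**}|$ is the measure of the preimage of $A_i$ plus two boundary pieces, which happens to yield the same exponent — you should argue via the preimage measure rather than a piece count. (2) Your per-step factor $L^{1-r_i/2}$ is only $\le L^{-r_i/3}$ for $r_i\ge 6$; this is in fact an implicit hypothesis of the paper's argument as well (it is what makes $(f^{\nu_k}|I_k(x))^{-1}(c_k)\subset L^{-r_k/3}\cdot I_k(x)$ true), so it is not a gap peculiar to your proof, but you should say explicitly that it forces $\alpha N_0\ge 6$, i.e.\ $N_0$ sufficiently large.
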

We finish the proof of (c) assuming the conclusion of this lemma.
First, we count the number of all itineraries with the same $R$ as
follows. First, two consecutive returns to $C_\delta$ are
separated at least by $\alpha N_0$, and thus the largest possible
number of returns in the first $n$ iterates is $n/\alpha N_0$.
Second, given $q\in[1,n/\alpha N_0]$, there are at most
$\left(\begin{smallmatrix}n\\q\end{smallmatrix}\right)$ number of
ways to choose the positions of $q$ number of free returns in
$[0,n]$. For each such way $(n_1,\cdots,n_q)$ there is at most
$\left(\begin{smallmatrix}R+q\\q\end{smallmatrix}\right)$ number
of ways to assign $r_1,\cdots,r_q$ with $r_1+\cdots+r_q=R$. Hence
\begin{equation}\label{sn}
|\{S\geq n\}|=\sum_{R}\sum_{\stackrel{P_n(\bold
i)}{r_1+\cdots+r_q=R}}|P_{n}(\bold i)|=
\sum_{R}\sum_{q=1}^{R/\alpha N_0}
\begin{pmatrix}n\\q\end{pmatrix}\begin{pmatrix}R+q\\q\end{pmatrix}
L^{-\frac{R}{3}}\leq\sum_RL^{-\frac{R}{4}}.\end{equation} The last
inequality follows from Stirling's formula for factorials. 

 To get a lower bound on $R$, take one element
$\omega\in\mathcal P_n'$ with an itinerary $\bold i$ and let
$0\leq n_1<\cdots<n_t< n$ denote all the free (both shallow and
deep) returns of the mid point $x_*$ of $\omega$ before $n$. Let
$p_k$ denote the bound period for $n_k$ and $s_k$ the unique
integer such that $d_C(f^{n_k}x_*)\in(L^{-s_k},L^{-s_k+1}]$ holds.
\begin{lemma}\label{space0}
For every $1\leq k<t$, $n_{k+1}-n_k\leq \frac{3s_k}{\lambda}$.
\end{lemma}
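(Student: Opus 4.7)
The plan is to split $n_{k+1}-n_k = p_k + (n_{k+1}-n_k-p_k)$ into the bound period and the subsequent free segment, and bound each piece. The bound period should contribute at most $2s_k/\lambda$, leaving at most $s_k/\lambda$ for the free segment.

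For the bound period, I would apply Lemma \ref{reclem1}(a) at the point $f^{n_k}x_*$, whose bound period to the critical point $c_k$ is exactly $p_k$. This gives $p_k \leq \log |c_k - f^{n_k}x_*|^{-2/\lambda}$. Since $d_C(f^{n_k}x_*) > L^{-s_k}$ by the definition of $s_k$, one obtains $p_k < 2s_k/\lambda$.

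For the free segment, first dispose of the trivial case: if $f^{n_k+p_k}x_* \in C_\delta$, then by definition $n_{k+1} = n_k + p_k$ and the claim follows. Otherwise let $q = n_{k+1}-n_k-p_k \geq 1$; we must show $q \leq s_k/\lambda$. Throughout $[n_k+p_k, n_{k+1})$ the orbit of $x_*$ stays outside $C_\delta$ and it re-enters $C_\delta$ at time $n_{k+1}$, so Lemma \ref{outside}(b) applies to the starting point $f^{n_k+p_k}x_*$ and gives $|(f^q)'(f^{n_k+p_k}x_*)| \geq L^{2\lambda q}$. Combining with Lemma \ref{reclem1}(b) for the bound period yields
$$|(f^{n_{k+1}-n_k})'(f^{n_k}x_*)| \geq L^{(s_k-1)(1-16\alpha/\lambda) + 2\lambda q}.$$

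To convert this into an upper bound on $q$, I would match the lower estimate against an upper estimate obtained from the hypothesis $\omega_n \in \mathcal P_n'$, which forces $|f^{n_{k+1}}\omega_{n_{k+1}}| < \sqrt\delta$ when $n_{k+1} \geq M_0$. By bounded distortion (P2), this gives $|(f^{n_{k+1}})'x_*| \leq 2\sqrt\delta / |\omega_{n_{k+1}}|$, and a complementary lower bound on $|(f^{n_k})'x_*|$ coming from (P1) applied to $\omega_{n_k}$ converts the ratio into a controlled expression. A direct comparison of exponents (using $s_k \geq \alpha N_0 \gg 1$ to absorb the $16\alpha/\lambda$ correction and the $1$ shift) forces $q \leq s_k/\lambda$, giving $n_{k+1}-n_k \leq 2s_k/\lambda + s_k/\lambda = 3s_k/\lambda$.

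The main obstacle is producing the upper bound cleanly, particularly in the boundary case where $n_k < M_0$ so that the constraint $|f^{n_k}\omega_{n_k}| < \sqrt\delta$ is unavailable. That case would be treated separately using assumption (a) of Section \ref{gs} on the critical orbit together with the fact that $s_k \geq \alpha N_0$ already makes $3s_k/\lambda$ comparable to $M_0$, so the naive bound $n_{k+1}-n_k \leq M_0 + p_k$ suffices.
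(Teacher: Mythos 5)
Your opening moves—bounding the bound period $p_k$ via Lemma \ref{reclem1}(a), observing that the segment $[n_k+p_k,\,n_{k+1}]$ stays outside $C_\delta$, and invoking Lemma \ref{outside} for exponential growth there—coincide with the paper's. But the conversion step, ``matching the lower estimate against an upper estimate obtained from the hypothesis,'' does not close. The hypothesis $\omega_n\in\mathcal P_n'$ only yields, via (P2) and bounded distortion, that the \emph{product} $|(f^m)'x_*|\cdot D_m(x_*)\lesssim\sqrt{\delta}$ for $M_0\leq m\leq n$; here $D_m(x_*)^{-1}=\sqrt{L}\sum_{i<m}d_i^{-1}(x_*)$ carries the entire past history of the orbit, not just the block $[n_k,n_{k+1}]$. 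It gives neither an upper bound on $|(f^{n_{k+1}})'x_*|$ alone nor the lower bound on $|(f^{n_k})'x_*|$ you try to extract from (P1): (P1)/(P2) relate $|\omega_{n_k}|$ to $D_{n_k}(x_*)$ but say nothing about the derivative by itself (the quantity $D_{n_{k+1}}(x_*)$ can, a priori, be arbitrarily small, absorbing any amount of derivative growth over $[n_k,n_{k+1}]$). So a ``direct comparison of exponents'' between derivatives is not available.

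The paper closes this gap with a contradiction argument on the full distortion sum. Assuming $n_{k+1}-n_k>3s_k/\lambda$, it factors $|(f^{n_{k+1}-n_j-p_j})'f^{n_j+p_j}x_*|$ through the free segment $[n_k+p_k,n_{k+1}]$ for \emph{every} earlier free return $n_j$, $1\le j\le k$, so that the excess free expansion damps the contribution of every prior bound segment, not merely the one at $n_k$. Combined with (\ref{sublem2}) for bound segments and (\ref{exp0}) for free iterates, this yields $\sum_{i<n_{k+1}}|(f^{n_{k+1}})'x_*|^{-1}d_i^{-1}(x_*)<2\delta^{-1/3}$, hence $|(f^{n_{k+1}})'x_*|D_{n_{k+1}}(x_*)$ is large, hence $|f^{n_{k+1}}\omega_{n_{k+1}}|\geq\sqrt{\delta}$ by (P2) and bounded distortion---contradicting $\omega\in\mathcal P_n'$. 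The essential ingredient missing from your sketch is the control of the contributions from all bound segments with $j<k$; a derivative comparison over $[n_k,n_{k+1}]$ alone cannot see these.
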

\begin{proof}
We assume $n_{k+1}>n_k+\frac{3s_k}{\lambda}$ and derive a
contradiction. By the upper estimate of the bound period in Lemma
\ref{reclem1}, $n_{k+1}>n_k+p_k+\frac{s_k}{\lambda}$ holds. By
Lemma \ref{outside},
$$|(f^{n_{k+1}-n_{k}-p_k})'f^{n_k+p_k}x_*|\geq L^{s_k}
\geq|d_C(f^{n_k}x_*)|^{-1}.$$ For every $1\leq j\leq k,$
\begin{align*}
|(f^{n_{k+1}-n_{j}-p_j})'f^{n_j+p_j}x_*|&=
|(f^{n_{k+1}-n_{k}-p_k})'f^{n_k+p_k}x_*|
|(f^{n_{k}+p_k-n_{j}-p_j})'f^{n_j+p_j}x_*|\\
&\geq
|d_C(f^{n_k}x_*)|^{-1}L^{\frac{\lambda}{3}(n_{k}+p_k-n_{j}-p_j)},
\end{align*}
and therefore
\begin{align*}\sum_{i=n_j}^{n_j+p_j}|(f^{n_{k+1}})'x_*|^{-1}
d_i^{-1}(x_*)&=|(f^{n_{k+1}-n_{j}-p_j})'f^{n_j+p_j}x_*|^{-1}\cdot
\sum_{i=n_j}^{n_j+p_j}|(f^{n_j+p_{j}})'x_*|^{-1}d_i^{-1}(x_*)\\
& \leq L^{-\frac{\lambda}{3}(n_{k}+p_k-n_{j}-p_j)}
|d_C(f^{n_k}x_*)|^{1-\frac{18\alpha }{\lambda}}\leq
L^{-\frac{\lambda}{3}(n_{k}+p_k-n_{j}-p_j)}.\end{align*} For the
second factor in the right-hand-side of the equality we have used
(\ref{sublem2}). Summing this over all $1\leq j\leq k$ and adding
the contribution from all the free iterates outside of $C_\delta$
which was estimated in (\ref{exp0}), \begin{align*}
\sum_{i=0}^{n_{k+1}-1} |(f^{n_{k+1}
})'x_*|^{-1}d_i^{-1}(x_*)&=\sum_{j=1}^{k}\sum_{i=n_j}^{n_j+p_j}
+\sum_{i\in\cup_{j=1}^k(n_j+p_j,n_{j+1})}\\
&\leq\frac{1}{\delta^{\frac{1}{3}}}+\sum_{j=1}^k
L^{-\frac{\lambda}{3}(n_{k}+p_k-n_{j}-p_j)}\leq\frac{1}{\delta^{\frac{1}{3}}}+
\sum_{i=0}^\infty L^{-\frac{\lambda
i}{3}}<\frac{2}{\delta^{\frac{1}{3}}}.
\end{align*}
Taking reciprocal and then using the bounded distortion of
$f^{n_{k+1}}|\omega_{n_{k+1}}$, we have
$|f^{n_{k+1}}\omega_{n_{k+1}}|\geq\sqrt{\delta}$. This yields a
contradiction to the assumption $S(\omega)\geq n$.
\end{proof}

Summing the inequality in Lemma \ref{space0} over all $1\leq k<t$
gives \begin{equation}\label{lu} n_t\leq
n_1+\frac{3}{\lambda}\sum_{k=1}^{t-1}s_k.\end{equation} From this
point on we assume $n\geq 2M_0.$ Then $n_1\leq n/2$ holds, for
otherwise $n_1>n/2\geq M_0$ and
$|f^{n_1}\omega_{n_1}|\geq\sqrt{\delta}$ would follow from Lemma
\ref{escape2}, a contradiction to $S(\omega)\geq n$. We have
$$n_t+p_t\leq \frac{n}{2}+p_t+\frac{3}{\lambda}\sum_{k=1}^{t-1}s_k
\leq\frac{n}{2}+\frac{3}{\lambda}\sum_{k=1}^{t}s_k\leq\frac{n}{2}
+\frac{6}{\lambda}R.$$ We have used $p_t\leq\frac{2}{\lambda}s_t$
for the second inequality which follows from Lemma \ref{reclem1},
and Lemma \ref{f4-s3.1} for the last. When $n$ is bound, then
$n<n_t+p_t$ holds, and thus the above inequality yields
$R\geq\frac{\lambda n}{12}$. When $n$ is free, repeating the
argument in the proof of Lemma \ref{space0} we get
$n-n_t\leq\frac{3s_t}{\lambda}$. Combining this with (\ref{lu})
yields the same lower bound of $R$. Consequently we obtain
$|\{S\geq n\}|\leq L^{-\frac{\lambda n}{24}}$ for every $n\geq
2M_0$. As $|I|\leq\delta$, $|\{S\geq n\}|\leq \delta
L^{\frac{2\lambda M_0}{24}}L^{-\frac{\lambda n}{24}}$ holds for
every $n>0$. The choice of $M_0$ in (\ref{la}) gives $
L^{\frac{\lambda M_0}{12}}\leq \delta^{-\frac{1}{12}}$, and the
desired inequality holds.

\subsection{Proof of Lemma \ref{pro-R1}}\label{pro}
We first treat the case $\nu_1>0$. For all $x\in P_n(\bold i)$ and
each $k\in[1,q]$ we define an interval $I_k(x)$ in such a way that
$f^{\nu_k}$ sends $I_k(x)$ to an interval injectively with bounded
distortion.
Let $\hat I_k(x)$ denote the interval of length $D_{\nu_k}(x)$
centered at $x$. By Lemma \ref{dist}, the distortion of
$f^{\nu_k}|\hat I_k(x)$ is uniformly bounded, while $f^{\nu_k}$
may not be injective on $\hat I_k(x).$ If $|f^{\nu_k}(\hat
I_k(x))|\leq1/4$, define $I_k(x)= \hat I_k(x)$. Otherwise, define
$I_k(x)$ to be the interval of length $ |\hat
I_k(x)|/(10|f^{\nu_k}(\hat I_k(x))|)$ centered at
$x$. By construction, $f^{\nu_k}$ is injective on $I_k(x).$ 
\smallskip

\noindent{\it Notation.} For a compact interval $I$ centered at
$x$ and $r>0$, let $ r\cdot I$ denote the interval of length
$r|I|$ centered at $x$.
\smallskip

For each $k\in[1,q]$, we choose a subset (possibly infinite)
$\{x_{k,i}\}_{i}$ of $P_n(\bold i)$ with the following properties:
\smallskip

\noindent(i) the intervals $\{I_{k}(x_{k,i})\}_{i}$ are pairwise
disjoint and $P_n(\bold i)\subset \bigcup _{i}L^{-r_k/3}\cdot
{I}_{k}(x_{k,i});$

\noindent(ii) for each $k\in[2,q]$ and $x_{k,i}$ there exists
$x_{k-1,j}$ such that ${I}_{k}(x_{k,i})\subset
2L^{-r_{k-1}/3}\cdot{I}_{{k-1}}(x_{k-1,j})$.
\smallskip

Let $M_k=\sum_{i}|{I}_{k}(x_{k,i})|.$ It follows that $|P_n(\bold
i)|\leq L^{-r_q/3}M_q$ and $M_k\leq L^{-r_{k-1}/3}M_{k-1},$ and
therefore $|P_n(\bold i)|\leq L^{-\frac{1}{3}\sum_{k=1}^q r_k},$
and the desired estimate holds.
\smallskip

For the definition of the subsets we need two combinatorial
lemmas.
 The following elementary fact from
Lemma \ref{exp} is used in the proofs of these two lemmas:
$(f^{\nu_k}|I_k(x))^{-1}(c_k)$ consists of a single point and
$(f^{\nu_k}|I_k(x))^{-1}(c_k)\subset L^{-r_k/3}\cdot I_{k}(x)$.

\begin{lemma}\label{lem1}
If $x$, $y\in P_n(\bold i)$ and $y\notin I_{k}(x)$, then
$I_{k}(x)\cap I_{k}(y)=\emptyset$ .
\end{lemma}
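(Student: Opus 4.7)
The proof will proceed by contradiction: assume $I_k(x)\cap I_k(y)\neq\emptyset$ while $y\notin I_k(x)$. The plan is to promote the injectivity of $f^{\nu_k}$ on each of $I_k(x)$ and $I_k(y)$ separately to injectivity on their union, forcing the two preimages of $c_k$ to collapse to a single point; then the elementary fact $(f^{\nu_k}|I_k(\cdot))^{-1}(c_k)\subset L^{-r_k/3}\cdot I_k(\cdot)$ quoted just before the lemma will localize $x$ and $y$ so tightly around this common preimage that $y\in I_k(x)$ follows, contradicting the hypothesis.

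The first step glues the injectivities. On each of $I_k(x)$ and $I_k(y)$ the map $f^{\nu_k}$ is continuous and injective, hence strictly monotonic. On the nonempty overlap $f^{\nu_k}$ is a single well-defined function, so the two monotonicity directions must agree, and monotonicity propagates to the connected union $I_k(x)\cup I_k(y)$. This step requires only that $f^{\nu_k}$ be well-defined on the union, which is the case because each $\hat I_k(\cdot)$ is a domain of definition by Lemma \ref{dist} and the union of overlapping such domains is again one. Consequently the preimage of $c_k$ in $I_k(x)\cup I_k(y)$ is a single point $c_\ast$, which equals both $(f^{\nu_k}|I_k(x))^{-1}(c_k)$ and $(f^{\nu_k}|I_k(y))^{-1}(c_k)$.

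The second step turns this into a metric contradiction. The elementary fact gives
$$|x-c_\ast|\leq\tfrac{1}{2}L^{-r_k/3}|I_k(x)|\quad\text{and}\quad|y-c_\ast|\leq\tfrac{1}{2}L^{-r_k/3}|I_k(y)|.$$
A bounded-distortion comparison on the overlapping distortion domains $\hat I_k(x),\hat I_k(y)$, together with the two-case definition of $I_k(\cdot)$, yields $|I_k(x)|\asymp|I_k(y)|$ up to a universal constant. The triangle inequality then produces $|x-y|\leq C\,L^{-r_k/3}\,|I_k(x)|$ for an absolute $C$. Since $\nu_k$ is a return to $C_\delta$ one has $d_C(f^{\nu_k}x)<\delta=L^{-\alpha N_0}$, so $r_k\geq\alpha N_0$ and $L^{-r_k/3}\leq\delta^{1/3}$, which is much smaller than $1/(2C)$ once $L$ is large. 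Therefore $|x-y|<|I_k(x)|/2$, i.e.\ $y\in I_k(x)$, the desired contradiction.

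The main technical nuisance is the size comparability $|I_k(x)|\asymp|I_k(y)|$, since the definition of $I_k(\cdot)$ splits into two cases depending on the size of $|f^{\nu_k}(\hat I_k(\cdot))|$; this is, however, just bounded-distortion bookkeeping rather than a genuinely new estimate. The monotonicity-gluing step is elementary once the injectivity of $f^{\nu_k}$ on each $I_k(\cdot)$ and its well-definedness on the union are acknowledged.
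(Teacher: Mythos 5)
Your proof is correct and takes essentially the same route as the paper's: both exploit that the overlap of $I_k(x)$ and $I_k(y)$ forces $f^{\nu_k}$ to be injective on the connected union, combine this with the preimage-localization fact $(f^{\nu_k}|I_k(\cdot))^{-1}(c_k)\subset L^{-r_k/3}\cdot I_k(\cdot)$ and the size comparability $|I_k(x)|\approx|I_k(y)|$ from Lemma~\ref{dist}, and derive a contradiction with $y\notin I_k(x)$. (The paper runs the contradiction through ``the two preimages must be distinct, contradicting injectivity,'' whereas you run it through ``the two preimages coincide, hence $|x-y|$ is small, hence $y\in I_k(x)$''; these are logically equivalent. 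Incidentally, the paper's wording ``This and $y\in I_k(x)$'' appears to be a misprint for ``$y\notin I_k(x)$,'' which your version makes explicit.)
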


\begin{proof}
Suppose $I_k(x)\cap I_k(y)\neq\emptyset$. Lemma \ref{dist} gives
$|I_{k}(x)|\approx |I_{k}(y)|$. This and $y\in I_k(x)$ imply
$(f^{\nu_k}|I_k(x))^{-1}(c_k)\neq(f^{\nu_k}|I_k(y))^{-1}(c_k).$ On
the other hand, by the definition of the intervals $I_{k}(\cdot)$,
$f^{\nu_k}$ is injective on $I_{k}(x)\cup I_{k}(y)$. A
contradiction arises.
\end{proof}

\begin{lemma}\label{lem2} If $x$, $y\in P_n
(\bold i)$
 and
$y\in L^{-r_k/3}\cdot I_{k}(x)$, then $I_{{k+1}}(y)\subset
2L^{-r_k/3}\cdot I_{k}(x)$.
\end{lemma}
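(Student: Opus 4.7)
My plan is to reduce the containment to a length inequality via the triangle inequality, and then bound both sides by combining bounded distortion with Lemma \ref{exp}. Since $y\in L^{-r_k/3}\cdot I_{k}(x)$ means $|y-x|\le\tfrac12 L^{-r_k/3}|I_{k}(x)|$, for every $z\in I_{k+1}(y)$ we have
\[
|z-x|\le |z-y|+|y-x|\le \tfrac12|I_{k+1}(y)|+\tfrac12 L^{-r_k/3}|I_{k}(x)|.
\]
The interval $2L^{-r_k/3}\cdot I_{k}(x)$ has half-length $L^{-r_k/3}|I_{k}(x)|$, so the whole statement reduces to proving
\[
|I_{k+1}(y)|\le L^{-r_k/3}|I_{k}(x)|.
\]

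For the upper bound on $|I_{k+1}(y)|$, I use $I_{k+1}(y)\subset\hat I_{k+1}(y)$ and monotonicity of $D_{n}(y)$ in $n$ to get $|I_{k+1}(y)|\le D_{\nu_{k+1}}(y)\le D_{\nu_{k}+1}(y)$. Keeping only the term $i=\nu_{k}$ in the sum defining $D_{\nu_{k}+1}(y)^{-1}$ yields
\[
D_{\nu_{k}+1}(y)\le \frac{d_{\nu_{k}}(y)}{\sqrt{L}}=\frac{d_C(f^{\nu_{k}}y)\,d_S(f^{\nu_{k}}y)}{\sqrt{L}\,|(f^{\nu_{k}})'y|}.
\]
Because $y\in I_{k}(x)\subset\hat I_{k}(x)$ and $\hat I_{k}(x)$ has length $D_{\nu_{k}}(x)$, Lemma \ref{dist} provides bounded distortion of $f^{\nu_{k}}$ on $\hat I_{k}(x)$; since $x$ and $y$ share the itinerary $\mathbf{i}$, we also get $d_C(f^{\nu_{k}}y)\asymp d_C(f^{\nu_{k}}x)\le L^{-r_k+1}$. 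With $d_S\le 1$ this yields $|I_{k+1}(y)|\lesssim L^{-r_k+1/2}/|(f^{\nu_{k}})'x|$.

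For a lower bound on $|I_{k}(x)|$ I split according to the two cases in its definition. In the untruncated case $|I_{k}(x)|=D_{\nu_{k}}(x)$, and Lemma \ref{exp} applied at the deep return $\nu_{k}$ of $x$ gives $|I_{k}(x)|\,|(f^{\nu_{k}})'x|\ge \sqrt{d_C(f^{\nu_{k}}x)}\gtrsim L^{-r_k/2}$. In the truncated case, bounded distortion of $f^{\nu_{k}}$ on $\hat I_{k}(x)$ gives $|f^{\nu_{k}}(\hat I_{k}(x))|\asymp D_{\nu_{k}}(x)|(f^{\nu_{k}})'x|$, hence $|I_{k}(x)|\asymp 1/|(f^{\nu_{k}})'x|$, which is an even stronger lower bound. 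In both cases $|I_{k}(x)|\,|(f^{\nu_{k}})'x|\gtrsim L^{-r_k/2}$, so
\[
\frac{|I_{k+1}(y)|}{|I_{k}(x)|}\lesssim L^{-r_k/2+1/2}\le L^{-r_k/3},
\]
the last inequality using $r_k\ge \alpha N_{0}\gg 1$, which holds because $f^{\nu_{k}}x\in C_{\delta}$ with $\delta=L^{-\alpha N_{0}}$. The main technical point to check is that Lemma \ref{exp} is legitimately applicable at $x$ itself at time $\nu_{k}$: the deep-return condition \eqref{inessential} was formulated via the midpoint of the element of $\mathcal P_n'$ containing $x$, but the bounded distortion of $f^{\nu_{k}}$ on this element transfers the inequalities on $d_C(f^{n_j}\cdot)$ from the midpoint to $x$ up to a harmless constant, which is absorbed in the $L^{1/2}$ slack above.
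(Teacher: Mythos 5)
Your argument is correct but follows a genuinely different route from the paper's. The paper argues topologically: the preceding ``elementary fact'' (itself a consequence of Lemma \ref{exp}) places the unique preimage $w=(f^{\nu_k}|I_k(x))^{-1}(c_k)$ inside $L^{-r_k/3}\cdot I_k(x)$, and the paper then observes $w\notin I_{k+1}(y)$ (otherwise $(f^{\nu_k+1})'w=0$ and the distortion of $f^{\nu_{k+1}}|I_{k+1}(y)$ would be infinite); since $y$ and $w$ both lie in the core, the component of $I_{k+1}(y)\setminus\{y\}$ toward $w$ is trapped there, which forces the inclusion. You instead prove the single length inequality $|I_{k+1}(y)|\le L^{-r_k/3}|I_k(x)|$ directly and deduce the containment by the triangle inequality, bounding $|I_{k+1}(y)|$ from above by truncating the sum defining $D_{\nu_k+1}(y)$ at the term $i=\nu_k$, and bounding $|I_k(x)|$ from below by Lemma \ref{exp}; the resulting ratio $\lesssim L^{-r_k/2+1/2}$ beats $L^{-r_k/3}$ because $r_k>\alpha N_0$. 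Both arguments ultimately rest on Lemma \ref{exp}, but you never need the key point of the paper's proof -- that $w\notin I_{k+1}(y)$ -- and in exchange you must track multiplicative constants (your $\asymp$, $\lesssim$ and the distortion factor $2$), which are safely absorbed into the gap $L^{r_k/6-1/2}\gg1$. One small gain of your version: the constant $2$ in the conclusion falls out cleanly from $r+R\le 2R$, whereas in the paper's argument getting $2$ rather than $3$ actually requires the additional (implicit) observation that $|x-w|\ll L^{-r_k/3}|I_k(x)|$. Both proofs share the same mild imprecision you flag at the end: Lemma \ref{exp} is applied at arbitrary $x\in P_n(\bold i)$ although the deep-return itinerary was assigned via the midpoint $x_*$ of its $\omega_n$; the clean fix, as you indicate, is to apply Lemma \ref{exp} at $x_*$ and transfer $D_{\nu_k}$, $|(f^{\nu_k})'|$ and $d_C(f^{\nu_k}\cdot)$ to $x$ by bounded distortion, which costs only a bounded factor.
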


\begin{proof}
We have $(f^{\nu_k}|I_k(x))^{-1}(c_k)\notin I_{k+1}(y)$, for
otherwise the distortion of $f^{\nu_{k+1}}|I_{k+1}(y)$ is
unbounded. This and the assumption together imply that one of the
connected components of $I_{k+1}(y)-\{y\}$ is contained in
$L^{-r_k/3}\cdot I_{k}(x)$. This implies the inclusion.
\end{proof}

We are in position to choose subsets $\{x_{k,i}\}_{i}$ satisfying
(i) (ii). Lemma \ref{lem1} with $k=1$ allows us to pick a subset
$\{x_{1,i}\}$ such that the corresponding intervals $
\{I_{1}(x_{1,i})\}$ are pairwise disjoint, and altogether cover
$P_n(\bold i)$. Indeed, pick an arbitrary $x_{1,1}$. If $
I_{1}(x_{1,1})$ covers $P_n(\bold i)$, then the claim holds.
Otherwise, pick $x_{1,2}\in P_n(\bold i)-{I}_{1}(x_{1,1})$. By
Lemma \ref{lem1}, $I_{1}(x_{11}),{I}_{1}(x_{12})$ are disjoint.
Repeat this. By Lemma \ref{lem1}, we end up with pairwise disjoint
intervals. To check the inclusion in (i), let $x\in
{I}_{1}(x_{1i})-L^{-r_1/3}\cdot{I}_{1}(x_{1i})$. By Lemma
\ref{exp}, $|f^{\nu_1}x-c_{1}|\gg L^{-r_k}$ holds. Hence $x\notin
P_n(\bold i)$.

Given $\{x_{k-1,j}\}_j$, we choose $\{x_{k,i}\}_i$ as follows. For
each $x_{k-1,j}$, similarly to the previous paragraph it is
possible to choose parameters $\{x_{m}\}_m$ in $P_n(\bold i )\cap
L^{-r_{k-1}/3}\cdot{I}_{{k-1}}(x_{k-1,j})$ such that the
corresponding intervals $\{I_k(x_m)\}_m$ are pairwise disjoint and
altogether cover $P_n(\bold i )\cap
L^{-r_{k-1}/3}\cdot{I}_{{k-1}}(x_{k-1,j})$. In addition, Lemma
\ref{lem2} gives $\bigcup_m{I}_{k}(x_{m})\subset 2L^{-r_{k-1}/3}
 \cdot{I}_{{k-1}}(x_{k-1,j}).$ Let $\{x_{k,i}\}_{i}=
\bigcup_{j}\{x_{m}\}$.

It is left to treat the case $\nu_1=0$. In this particular case,
by definition of $\bold i$, $P_n(\bold i)$ is contained in
$(-L^{-r_1+1},L^{-r_1+1})$. Hence, the desired estimate holds if
$q=1$. If $q>1$, then in the same way as above, it is possible to
show $|P_n(\bold i )|\leq L^{-\frac{1}{3}(R-r_1)}2L^{-r_1+1}$,
which is $\leq L^{-\frac{R}{3}}$. This finishes the proof of Lemma
\ref{pro-R1}. \qed

\section{Induced Markov map on $S^1$}
In this section we construct an induced Markov map on $S^1$ and
complete the proofs of the theorems.
\begin{prop}\label{Mar}
There exist a partition $\mathcal Q$ of a full measure set of
$S^1$ into a countable number of open intervals and a return time
function $R\colon\mathcal Q\to\{n\in\mathbb N\colon n>M_0\}$ with
the following properties. For each $\omega\in\mathcal Q$, $F:=f^R$
sends $\omega$ injectively, so that $\overline{F(\omega)}=S^1$.
There exists $K>0$ such that for all $\omega\in\mathcal Q$ and all
$x$, $y\in\omega$,
\begin{equation}\label{equation}
\left|\frac{|F'(x)|}{|F'(y)|}-1\right|\leq
K|F(x)-F(y)|.\end{equation} In addition, $|\{R=n\}|\leq \delta
L^{-\frac{\lambda n}{26}}$ holds for every $n>M_0$. Here,
$\{R=n\}$ denotes the union of $\omega\in\mathcal Q$ such that
$R(\omega)=n$.
\end{prop}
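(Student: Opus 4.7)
The plan is to bootstrap Proposition~\ref{escape} into a full Markov structure by iterating the ``growing to large scale'' construction, using the logarithmic singularities to supply the final cut that produces branches mapping bijectively onto $S^1$. The decisive fact is that since $f(x)=x+a+L\ln|\Phi(x)|$ blows up at every $s\in S(f)$, any sufficiently small neighborhood of $s$ is partitioned by the level sets of $f\bmod 1$ into a countable family of monotone branches, each mapped by $f$ diffeomorphically onto $S^1$.

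The construction proceeds as follows. First cover $S^1$ by finitely many intervals of length in $[\delta/10,\delta]$ with pairwise disjoint interiors, and apply Proposition~\ref{escape} to each, obtaining a first-generation refinement with stopping time $S^{(1)}\geq M_0$ and images of length at least $\sqrt{\delta}$. For each piece $\omega$ whose image $J:=f^{S^{(1)}(\omega)}(\omega)$ enters a fixed small neighborhood $S_{\varepsilon_1}$ of the singular set, one more application of $f$ stretches it across $S^1$ (indeed by Lemma~\ref{derivative} the integral of $|f'|$ over $J\cap S_{\varepsilon_1}$ is at least one). I would then pull back the countable family of full-$S^1$ branches of $f|_{S_{\varepsilon_1}}$ to obtain sub-intervals of $\omega$ on which $f^{S^{(1)}+1}$ is a diffeomorphism onto $S^1$, and declare $R=S^{(1)}+1$ on these sub-intervals. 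For the remaining undeclared pieces, cover their current image by intervals of length in $[\delta/10,\delta]$, pull back, apply Proposition~\ref{escape} again to obtain an additional stopping time $S^{(2)}$, extract any new full branches at this generation, and iterate; $R$ is the accumulated stopping time at the generation in which the piece becomes declared. The distortion bound (\ref{equation}) then follows by composing Proposition~\ref{escape}(b) across generations with the distortion of the final step $f$ on each singular branch $J_n$, the latter bounded by $\int_{J_n}|f''|/|f'|\leq K_0|J_n|/d_S(J_n)=O(1/L)$ via Lemma~\ref{derivative} and the logarithmic asymptotics of $f$ near $S(f)$.

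For the tail, each generation contributes an exponential factor $L^{-\lambda m/24}$ from Proposition~\ref{escape}(c), and a uniformly positive fraction of still-undeclared mass should be extracted per generation, because the elements of Proposition~\ref{escape}'s partition are cut precisely where images meet $C\cup S$, so a definite proportion end up adjacent to $S(f)$. Convolving the exponential tails with the geometric count of generations and absorbing the initial scale $|I|\leq\delta$ should yield $|\{R\geq n\}|\leq C\delta L^{-\lambda n/26}$, with the slight rate loss $\lambda/24\to\lambda/26$ absorbing the convolution overhead.

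The hard part will be verifying that at each generation a uniformly positive fraction of still-undeclared mass is captured by singular branches: one must show that among the pieces produced by Proposition~\ref{escape} a definite proportion have images entering $S_{\varepsilon_1}$ rather than lingering near $C(f)$, independently of the generation and of the piece's history. A secondary technical point is the uniform distortion bound across the infinitely many branches $J_n$ near each $s\in S(f)$: it requires $|J_n|/d_S(J_n)=O(1/L)$ uniformly in $n$, which follows from the logarithmic structure but needs careful verification at the branches closest to $s$, where both $|J_n|$ and $d_S(J_n)$ are small yet their ratio remains bounded by $1/L$.
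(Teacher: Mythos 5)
Your general plan---iterate Proposition~\ref{escape}, cascade the large-scale intervals, and produce full branches onto $S^1$---is the paper's plan, and your use of the logarithmic singularity to generate the countably many full branches is Case~I of the paper's Lemma~\ref{mix}. The genuine gap is exactly the one you flag yourself: you need a definite proportion of the large-scale pieces to reach $S_{\varepsilon_1}$ rather than a critical neighborhood, and nothing in your construction forces this. The paper never tries to prove such a statement. Instead, Lemma~\ref{mix'} shows that every interval of length $\ge\sqrt\delta/3$ contains a subinterval whose $f^M$-image ($M\le M_0$) fills one full component of $C_\delta\cup S_\delta$; when this component lies in $S_\delta$ your singularity argument applies, but when it lies in $C_\delta$ the paper uses a second, independent mechanism: it follows the critical orbit for $N_1=[10\alpha N_0]$ more steps and proves in Sublemma~\ref{rap} that $f^{N_1+1}(I_{N_1}(c))=S^1$, relying on the standing hypothesis (a) that $d_C(v_i)\geq\sigma$, $d_S(v_i)\geq\sigma$ for $0\le i\le N_0$. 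This second mechanism is indispensable. Together the two cases yield a good $(\varepsilon_0,k)$-pair for \emph{every} interval of length $\geq\sqrt\delta$, so a uniformly positive fraction is declared at every large-scale event automatically; the unverified claim your argument hangs on disappears.

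Once Lemma~\ref{mix} is available the tail estimate also runs differently from your convolution heuristic. The paper conditions on the number $i$ of large-scale times before $n$ and their gaps $(k_1,\dots,k_i)$, applies Proposition~\ref{escape}(c) together with the bounded distortion of the preceding iterates inductively to get $|\mathcal Q_n(k_1,\dots,k_i)|\le\delta^{11/12}L^{-\frac{1}{24}(k_1+\cdots+k_i)}$, and then sums over the compositions of $m\in[n-2M_0,n)$ into $i$ parts via Stirling. The positive-fraction fact enters only to show $\mathcal Q$ exhausts a full-measure set, and there it is automatic from property~(i) of a good pair. Your secondary worry about uniform distortion over the countably many singular branches is legitimate but not a gap: it is absorbed by the good-pair conditions $d_S(f^i\tilde\omega)\ge\varepsilon_0$ and by the cascade estimate (\ref{total}), whose geometric factor $\delta^{(q-i-1)/3}$ makes the accumulated distortion across generations summable.
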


Our inducing time consists of four explicit parts: the first part
is used to recover from the small derivatives near the critical
set (Proposition \ref{reclem1}); in the second, intervals reach a
``large scale scale" (Proposition \ref{escape}) and in the third
they reach a neighborhood of the critical set. The last part is
used to completely ``wrap" the circle.

In Section \ref{chu1} we prove a key lemma used in the third and
fourth parts of the inducing time. In Section \ref{full} we
construct the induced map $F$ with the desired properties. In
Section \ref{A} we prove Theorem A. In Section \ref{B} we prove
Theorem B.

\subsection{Inducing to the entire $S^1$}\label{chu1}
We show that intervals with scale $\sqrt{\delta}$ soon grow to the
entire $S^1$. There are two scenarios for this growth. One is to
take advantage of the nature of the singularities. The other is to
follow the initial iterates of the critical orbits, which are kept
out of $C_\sigma, S_\sigma$ by the standing hypothesis (a) in
Sect.\ref{gs}.

We first show that intervals with scale $\sqrt{\delta}$ soon reach
critical or singular neighborhoods.
\begin{lemma}\label{mix'} For any interval $\omega$ of length $\geq\sqrt{\delta}/3$,
there exist a subinterval $\omega'$ and an integer $M\leq M_0$
such that $d_C(f^i\omega')\geq\delta,$ $d_S(f^i\omega')\geq\delta$
for every $0\leq i<M$ and $f^M(\omega')$ coincides with one
component of $C_\delta\cup S_\delta$.
\end{lemma}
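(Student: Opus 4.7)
My plan is to iterate a carefully chosen subinterval of $\omega$ kept disjoint from $C_\delta \cup S_\delta$, exploiting Lemma~\ref{outside}(a) for exponential derivative growth. First, if $\omega$ already contains a full component of $C_\delta \cup S_\delta$, I take $\omega'$ equal to that component and $M = 0$; the required conditions then hold vacuously. Otherwise, since $\#(C\cup S)$ is finite and each component of $C_\delta \cup S_\delta$ has length $2\delta \ll \sqrt{\delta}$, removing these finitely many components from $\omega$ produces a connected subinterval $\omega^{(0)} \subset \omega$ of length $\geq \sqrt{\delta}/K$ (with $K$ a combinatorial constant) satisfying $\omega^{(0)} \cap (C_\delta \cup S_\delta) = \emptyset$.

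Next, let $M \geq 1$ be the smallest integer with $f^M\omega^{(0)} \cap (C_\delta \cup S_\delta) \neq \emptyset$. For $0 \leq i < M$ the interval $f^i\omega^{(0)}$ contains no point of $C \cup S$, so $f^M$ restricts to a diffeomorphism on $\omega^{(0)}$. Lemma~\ref{outside}(a) applied to each $x \in \omega^{(0)}$ gives $|(f^M)'x| \geq \delta L^{2\lambda M}$, and integrating over $\omega^{(0)}$ shows that the lift-length of the arc $f^M\omega^{(0)}$ is at least $\sqrt{\delta}\,\delta L^{2\lambda M}/K$. The choice (\ref{la}) of $M_0$ forces $\delta L^{2\lambda M_0} = \delta^{-3}$, so if $M > M_0$ then the lift-length of $f^{M_0}\omega^{(0)}$ would exceed $1/(K\delta^{5/2}) \gg 1$; the arc would have to wind around $S^1$, hence meet $C_\delta \cup S_\delta$ strictly before $M_0$, contradicting the minimality of $M$. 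Therefore $M \leq M_0$.

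Finally, I extract $\omega' \subset \omega^{(0)}$ with $f^M\omega'$ equal to a full component of $C_\delta \cup S_\delta$. In the main case the arc $f^M\omega^{(0)}$ has length much larger than $2\delta$ and crosses some component $[c-\delta, c+\delta]$ entirely, so I set $\omega' := (f^M|\omega^{(0)})^{-1}([c-\delta, c+\delta])$; the conditions $d_C(f^i\omega'), d_S(f^i\omega') \geq \delta$ for $i < M$ follow directly from $\omega' \subset \omega^{(0)}$ and the definition of $M$. The borderline case where $f^M\omega^{(0)}$ merely grazes the boundary of a component is handled by removing from $\omega^{(0)}$ the pullback of the grazing strip (of length at most $2\delta$) and iterating the shaved interval further, reapplying the same expansion estimate; the shaved length is negligible compared to $\sqrt{\delta}/K$ and is absorbed by a few more iterates. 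Alternatively, if the entry is into $S_\delta$ one can invoke the (P1)-case-(ii) mechanism: the logarithmic blowup of $|f'|$ near $S$ makes $f^{M+1}\omega^{(0)}$ wrap around $S^1$ completely, trivially capturing any desired component. The main obstacle is the bookkeeping of this terminal shaving-and-reiterating step, in particular verifying that it always terminates with a clean full-component capture within $M_0$ total iterates.
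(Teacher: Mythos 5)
Your approach is the paper's approach, but you have stopped exactly at the step the paper actually has to work for, and you've flagged it yourself: the ``bookkeeping of this terminal shaving-and-reiterating step.'' That is not a borderline technicality; it is the whole content of the lemma. Note first that your ``main case'' essentially never occurs at the first hitting time: at time $M$, $|f^M\omega^{(0)}|\geq |\omega^{(0)}|\,\delta L^{2\lambda M}\gtrsim \delta^{3/2}L^{2\lambda M}$, and for small $M$ this is far \emph{below} $2\delta$ (since $\delta^{1/2}L^{2\lambda M}\ll 1$ when $M$ is of modest size, given $\alpha N_0\gg\lambda$). So on first contact the image typically only grazes a component, you must shave and iterate, and you are in the ``borderline'' branch from the very start. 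Your separate argument that $M\leq M_0$ is then beside the point: it bounds the \emph{first} hitting time of a fixed interval, not the total number of iterates through a sequence of successively shaved intervals, whose lengths you have not yet controlled.

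The paper closes exactly this gap by a global accounting argument. It iterates the interval and at each step deletes the (at most two) endpoint pieces that land in $C_\delta\cup S_\delta$. For those deleted pieces Lemma~\ref{outside}(b) (not (a)) applies, since their $f^i$-image lies in $C_\delta$: this gives $|(f^i)'|\geq L^{2\lambda i}$ there, so each deleted piece pulled back into $\omega$ has length $\leq 2\delta L^{-2\lambda i}$, and the cumulative deleted length is $\leq 4\delta\sum_{i\geq 0}L^{-2\lambda i}\ll\sqrt{\delta}$. Hence the surviving interval $\Omega_n$ never drops below roughly $\sqrt{\delta}/6$ in length. Lemma~\ref{outside}(a) then gives $|f^n\Omega_n|\gtrsim\sqrt{\delta}\,\delta L^{2\lambda n}$, and by the choice (\ref{la}) of $M_0$ (so that $L^{2\lambda M_0}\approx\delta^{-4}$) this would exceed $1$ at $n=M_0$, which is impossible on $S^1$. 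So the shave-and-iterate process \emph{must} terminate strictly before $M_0$, and termination means some deleted segment has length exactly $2\delta$, i.e.\ a full component of $C_\delta\cup S_\delta$ is captured; its $f^{-n}$-preimage inside $\Omega_n$ is the desired $\omega'$. You need to supply this length-accounting step (and in particular notice that (b) rather than (a) is the relevant estimate for the shaved pieces); without it the proof is incomplete. One smaller point: removing finitely many components from $\omega$ does not yield a single connected subinterval --- you should say ``take the longest complementary piece,'' which has length comparable to $|\omega|$ since each removed piece has length $2\delta\ll\sqrt{\delta}$ and the components are uniformly separated.
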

\begin{proof}
 We iterate $\omega$, deleting
all parts that fall into $C_\delta\cup S_\delta$. Suppose that
this is continued up to step $n$, and that for every $i\leq n$,
none of these deleted segments is $<2\delta$ in length. By the
assumption, the number of deleted segments at step $i\leq n$ is
$\leq 2$. By Lemma \ref{outside}, all deleted parts in $\omega$
are $\leq4\delta\sum_{i=0}^nL^{-2\lambda i}$ in length. Hence, the
undeleted segment in $f^n\omega$ is $\geq
\left(\sqrt{\delta}/3-4\delta\sum_{i=0}^nL^{-\lambda
i}\right)\delta L^{2\lambda n}$ in length. It follows that before
step $M_0$ there must come a point when our claim is fulfilled.
\end{proof}

For convenience, let us introduce the following language.
\begin{definition}
{\rm Let $\varepsilon>0$ and $M>0$ an integer. A pair of open
intervals $(\omega,\tilde\omega)$ with $\tilde\omega\subset\omega$
is a {\it good $(\varepsilon,M)$-pair} if: (i)
$|\tilde\omega|\geq\varepsilon|\omega|$; (ii) $\omega\setminus
\tilde\omega$ has two components and their lengths are
$\geq\sqrt{\delta}/3$; (iii) $f^{M}$ is injective $\tilde\omega$
and $\overline{f^M(\tilde\omega)}=S^1$; (iv)
$d_C(f^i\tilde\omega)\geq\varepsilon,$
$d_S(f^i\tilde\omega)\geq\varepsilon$ for every $0\leq i<M$.}
\end{definition}

\begin{lemma}\label{mix} There
exists $0<\varepsilon_0<1$ such that for any interval $\omega$ of
length $\geq\sqrt{\delta}$, there exist a subinterval
$\tilde\omega$ in its middle third and an integer $k\leq 2M_0$
such that $(\omega,\tilde\omega)$ is a good
$(\varepsilon_0,k)$-pair.
\end{lemma}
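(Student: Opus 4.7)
The plan is to take $\omega_m$ to be the open middle third of $\omega$, which already ensures $|\omega_m|\ge\sqrt\delta/3$ and that, no matter which subinterval $\tilde\omega\subset\omega_m$ I eventually pick, $\omega\setminus\tilde\omega$ will consist of two components of length $\ge\sqrt\delta/3$, securing (ii). Applying Lemma \ref{mix'} to $\omega_m$ produces $\omega'\subset\omega_m$ and $M\le M_0$ with $d_C(f^i\omega'),d_S(f^i\omega')\ge\delta$ for $0\le i<M$ and $f^M(\omega')$ equal to one component of $C_\delta\cup S_\delta$. In particular $f^M|_{\omega'}$ is injective, since the orbit avoids critical points. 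The proof now splits according to the two scenarios in the preamble.

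If $f^M(\omega')$ is a component of $S_\delta$, I would restrict to one side $(s,s+\delta)$ of a singular point $s$. Near $s$, $\Phi(s)=0\ne\Phi'(s)$ yields $f'(x)\sim L/(x-s)$, so $f$ maps $(s+\delta/2,s+\delta)$ onto an arc of length $\sim L\ln 2$ that wraps $S^1$ many times. Choose a monotone injective subarc $\eta\subset(s+\delta/2,s+\delta)$ with $f(\eta)=S^1$, and pull it back through $(f^M|_{\omega'})^{-1}$ to obtain $\tilde\omega\subset\omega'$ with stopping time $k:=M+1\le 2M_0$. Keeping $\eta$ inside $(s+\delta/2,s+\delta)$ guarantees $d_S(\eta)\ge\delta/2$, preserving (iv) at the final iterate.

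If instead $f^M(\omega')$ is a component of $C_\delta$, I would restrict $\omega'$ to $\omega''$ with $f^M(\omega'')=(c-\delta,c-\delta/2)$, bounded away from $c$. Since $|f''(c)|\asymp L$ by Lemma \ref{derivative}(c), $f^{M+1}(\omega'')$ is an interval adjacent to the critical value $v_0$ of length $\asymp L\delta^2$. I then follow the critical orbit: by Lemma \ref{dist} (bounded distortion of $f^j$ at $v_0$) combined with the lower derivative bound (G2), the image after $M+1+j$ iterates has length $\gtrsim L\delta^2\cdot L^{\lambda j}$ so long as it stays within $\sigma/2$ of $v_j$, which holds for $j\le N_0$ by the standing hypothesis $d_C(v_j),d_S(v_j)\ge\sigma$ of Section~\ref{gs}(a). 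Pick $j$ minimal such that $L\delta^2\cdot L^{\lambda j}\ge 1$; the explicit choice $M_0=[2\alpha N_0/\lambda]$ makes $j\le M_0-1$ for $L,N_0$ large enough, so $k:=M+1+j\le 2M_0$. The image length at step $k-1$ is bounded by $(\inf|f'|)^{-1}\lesssim L^{-5/6}\ll\sigma/2$, justifying the size constraint throughout. Extracting $\tilde\omega\subset\omega''$ on which $f^k$ is injective and $f^k(\tilde\omega)=S^1$ finishes the construction.

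With $\varepsilon_0:=\min\{\delta/2,\sigma/2,c_L\}$ for a positive $c_L$ depending only on $L$ and $N_0$, conditions (iii) and (iv) are immediate from the construction and the estimates above, and (i) reduces to the bound $|\tilde\omega|\ge c_L|\omega|$, which follows from Lemma \ref{dist} bounded distortion together with an upper estimate $\sup|(f^k)'|\le c_L^{-1}$ obtained by multiplying the iterate-wise bounds of Lemma \ref{derivative}(a), valid because distances to $C\cup S$ are bounded below by $\varepsilon_0$ throughout. The main obstacle is the critical case: matching the growth time $(1/\lambda)\log(1/(L\delta^2))$ to the allowance $M_0=[2\alpha N_0/\lambda]$, while guaranteeing that the $N_0$-step safety window on the critical orbit from (a) comfortably contains this growth period. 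This works thanks to the hierarchy $\alpha\ll\lambda\ll 1$, which makes $M_0\ll N_0$ and leaves enough slack for the small constants absorbed in the bound $j\le M_0-1$.
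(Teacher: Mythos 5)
Your Case~I is essentially the paper's Case~I: restrict to one side of a singular point, use the logarithmic blow-up so that one full iterate wraps the circle, pull back. That part is fine.

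Your Case~II diverges from the paper and contains a genuine gap. The paper does not use the interval $(c-\delta,c-\delta/2)$; it uses the much smaller and carefully tuned interval $I_{N_1}(c)$ with $N_1=[10\alpha N_0]$, whose $f$-image has size comparable to the bounded-distortion scale $D_{N_1}(v_0)$, and it proves in Sublemma~\ref{rap} (the technical core of the lemma, which your argument never establishes) that $f^{N_1+1}(I_{N_1}(c))=S^1$ while the first $N_1$ images stay within $\sigma/2$ of the critical orbit. Your choices and estimates do not reproduce this. First, the image $f^{M+1}(\omega'')$ has length $\asymp L\delta^2=L^{1-2\alpha N_0}$; Lemma~\ref{dist} applies only on $[v_0-D_j(v_0),v_0+D_j(v_0)]$, and $D_j(v_0)\lesssim\sigma^2/(\sqrt{L}\,|(f^{j-1})'v_0|)$, so the inclusion $L\delta^2\le D_j(v_0)$ is violated long before $|(f^j)'v_0|$ reaches $L^{2\alpha N_0-1}$, i.e.\ long before the image can cover $S^1$ at the rate you use. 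You never check this inclusion, and it in fact fails. Second, using the (G2) rate $L^{\lambda j}$ forces $j\approx(2\alpha N_0-1)/\lambda\approx M_0$ iterates; but then the image size $\sim L\delta^2L^{\lambda j}$ exceeds $\sigma/2$ (and leaves the bounded-distortion window) well before $j$ reaches that value, so the hypothesis ``stays within $\sigma/2$ of $v_j$'' that you invoke is not self-sustaining and is not implied by the standing hypothesis~(a), which controls $d_C(v_j),d_S(v_j)$, not the distance of the image interval to $v_j$. The ``image length at step $k-1$ is $\le(\inf|f'|)^{-1}$'' step is circular: it presumes the very localization near the critical orbit that you are trying to establish. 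The rate that actually does the job in the paper's proof is not $L^{\lambda j}$ but the much faster $\gtrsim(L\sigma/K_0)^j$ afforded by hypothesis~(a) during the first $N_0$ iterates, which is why only $N_1\approx10\alpha N_0\ll M_0$ steps are needed; with that rate and the $I_{N_1}(c)$ choice, the image stays within bounded-distortion range up to step $N_1$ and jumps to full length at step $N_1+1$. Without reproducing the content of Sublemma~\ref{rap}, Case~II of your proof does not go through.

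Minor remark: the paper's constant is $\varepsilon_0=\min(\delta/10,\sigma/2,\inf_c\sqrt{D_{N_1+1}(c)/(K_0L)},\varepsilon_0')$; in your Case~II the scale $\inf_c\sqrt{D_{N_1+1}(c)/(K_0L)}$ (the distance of $\tilde\omega$ to $c$ at step $M$) plays an essential role and cannot be absorbed into a vague $c_L$ without the Sublemma~\ref{rap}-type estimates.
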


\begin{proof}
Take a subinterval $\omega'$ in the middle third of $\omega$ and
an integer $M$ for which the conclusion of Lemma \ref{mix'} holds.
We deal with two cases separately.
\smallskip

\noindent{\it Case I: $f^{M}\omega'\subset S_\delta$.} By the
nature of the singularity, there exists a subinterval
$\omega''\subset f^{M} \omega'$ such that
$d_S(\omega'')\geq\delta/10$, $f|\omega''$ is injective and
$\overline{f(\omega'')}=S^1$. Let $\tilde\omega=f^{-M}(\omega'')$
and $k=M+1$.
\smallskip

\noindent{\it Case II: $f^{M}\omega'\subset C_\delta$.} Let
$N_1=\left[10\alpha N_0\right].$ Let $c$ denote the critical point
in $f^{M}\omega'$. By the definition of $\delta$, $f^{M}\omega'$
contains $I_{N_1}(c)$.
\begin{sublemma}\label{rap}
$f^{N_1+1}(I_{N_1}(c))=S^1$, and $d_C(f^iI_{N_1}(c))\geq\sigma/2$,
$d_S(f^iI_{N_1}(c))\geq\sigma/2$ for every $1\leq i\leq N_1$.
\end{sublemma}
We finish the proof of Lemma \ref{mix} assuming the conclusion of
this sublemma. Take a subinterval $J\subset I_{N_1}(c)$ on which
$f^{N_1+1}$ is injective and $\overline{f^{N_1+1}(J)}=S^1$ holds.
Let $\tilde \omega=f^{-M}(J)$, and $k=M+N_1+1$. Sublemma \ref{rap}
gives $d_C(f^i\tilde\omega)\geq\sigma/2,$
$d_S(f^i\tilde\omega)\geq\sigma/2$ for every $M<i<k$. As
$f^M(\tilde\omega)\subset I_{N_1}(c)$, $d_C(f^M\tilde\omega)\geq
\sqrt{D_{N_1+1}(c)/(K_0L)}$ holds. (\ref{la}) gives $k\leq 2M_0$.
\smallskip

In either of the two cases, the $M$-iterates of $\omega'$ are kept
out of $C_\delta\cup S_\delta$. Hence, the distortion of
$f^{M}|\omega'$ is uniformly bounded and there exists a uniform
constant $0<\varepsilon_0'<1$ such that
$|\tilde\omega|\geq\varepsilon_0'|\omega'|$. Set
$$\varepsilon_0=\min\left(\delta/10,\sigma/2,\inf_{c\in C
}\sqrt{D_{N_1+1}(c)/(K_0L)},\varepsilon_0'\right).$$ 
Then, $(\omega,\tilde\omega)$ is a good $(\varepsilon_0,k)$-pair.
\smallskip

It is left to prove Sublemma \ref{rap}. Let $f^{i+1}(c)=v_i$. The
next standing hypothesis (a) in Sect.\ref{gs} is used:
$d_C(v_i)\geq\sigma$, $d_S(v_i)\geq\sigma$ for every $0\leq
i<N_0$.

Let $l=\min\{|x-y|\colon x\in C,y\in S\}>0$. It is easy to see
that, $d_C(v_i)\geq\sigma, d_S(v_i)\geq\sigma$ give
 $$d_C(v_i)d_S(v_i)\geq \sigma(l-\sigma)\geq l\sigma/2,$$
 where the last inequality holds for sufficiently large $L$.
In view of (\ref{Theta}) and
$|(f^{N_1})'(v_0)|\geq(L\sigma/K_0)^{N_1-i} |(f^i)'(v_0)|$,
$$\frac{1}{\sqrt{L}}\frac{d_{N_1}(v_0)}{D_{N_1}(v_0)}
= \sum_{i=0}^{N_1-1}\frac{|(f^i)'(v_0)|}{|(f^{N_1})'(v_0)|}
\frac{d_C(v_{N_1})d_S(v_{N_1})}{d_C(v_i)d_S(v_i)}
\leq\sum_{i=0}^{N_1-1}\frac{2}{(L\sigma/K_0)^{N_1-i}\sigma l}
\leq\frac{1}{\sqrt{L}}.$$ This yields
$D_{N_1}(v_0)/D_{N_1+1}(v_0)=1+\sqrt{L}D_{N_1}(v_0)d_{N_1}^{-1}(v_0)
\geq2.$ We also have
$$|(f^{N_1})'(v_0)D_{N_1}(v_0)|^{-1}
=\sqrt{L}\sum_{i=0}^{N_1-1}\frac{|(f^i)'(v_0)|}{|(f^{N_1})'(v_0)|}
\frac{1}{d_C(v_i)d_S(v_i)}\leq\frac{2}{\sqrt{L}\sigma^2l
(1-1/(L\sigma/K_0))} \leq L^{-\frac{1}{7}},$$ where the last
inequality follows from $\sigma=L^{-\frac{1}{6}}$. Hence
\begin{align*}|f^{N_1+1}(I_{N_1}(c))|&\geq
\frac{1}{2}|(f^{N_1})'v_0||f(I_{N_1}(c))|
\geq \frac{1}{2}|(f^{N_1})'v_0|(D_{N_1}(v_0)-D_{N_1+1}(v_0))\\
&\geq \frac{1}{4}|(f^{N_1})'D_{N_1}(v_0)|\geq
L^{\frac{1}{8}}\gg1.\end{align*} Hence, the first claim holds. The
second follows from the standing hypothesis and
$|f^i(I_{N_1}(c))|\leq |(f^{i})'v_0|D_{N_1}(v_0)\leq
\frac{1}{\sqrt{L}}|(f^{i})'v_0|d_i(v_0)\leq\frac{1}{\sqrt{L}}.$
This completes the proof of Sublemma \ref{rap}.
\end{proof}

\subsection{Full return map}\label{full}
We now define a partition $\mathcal Q$ of $S^1$ and a return time
function $R\colon \mathcal Q\to \mathbb N$. First of all, cut
$S^1$ into pairwise disjoint intervals of lengths from $\delta/10$
to $\delta$. For each interval, consider its partition $\mathcal
P$ and the associated return time function $S\colon \mathcal
P\to\mathbb N$, given by Proposition \ref{escape}. By Lemma
\ref{mix}, for each $\omega_1\in\mathcal P$ there exists a
subinterval $\tilde\omega_1$ and an integer $M_1$ such that
$(f^{S(\omega_1)}\omega_1,f^{S(\omega_1)}\tilde\omega_1)$ is a
good $(\varepsilon_0,M_1)$-pair. Let $\tilde\omega_1\in\mathcal Q$
and $R(\tilde\omega_1):=S(\omega_1)+M_1$. Each component of
$\omega_1\setminus\tilde\omega_1$ is said to {\it have $1$ large
scale times}.

Subdivide each component of $f^{S(\omega_1)}(\omega_1)\setminus
f^{S(\omega_1)}(\tilde\omega_1)$, which is of length
$\geq\sqrt{\delta}/3$ by the definition of good pairs, into
intervals of lengths from $\delta/10$ to $\delta$. To each
interval, consider again its partition $\mathcal P$ and the
stopping time function $S$ given by Proposition \ref{escape}. For
each element $\omega_2$ of the partition, there exist an integer
$M_2$ and a subinterval $\tilde \omega_2$ such that
$(f^{S(\omega_2)}\omega_2,f^{S(\omega_2)}\tilde \omega_2)$ is a
good $(\varepsilon_0,M_2)$-pair. Let $f^{-S(\omega_1)}(\tilde
\omega_2)\in\mathcal Q$ and $R(\tilde \omega_2)
:=S(\omega_1)+S(\omega_2)+M_2$. Each component of
$f^{-S(\omega_1)}\omega_2\setminus f^{-S(\omega_1)}\tilde\omega_2$
is said to {\it have $2$ large scale times}, and so on.
\smallskip

\subsubsection{Bounded distortion}
We verify (\ref{equation}). By construction, for each
$\omega\in\mathcal Q$ there exists an associated sequence of large
scale times
$$0=S_0<S_1<S_2<\cdots<S_{q(\omega)}<R(\omega)$$
with $R(\omega)=S_{q(\omega)}+t(\omega)$ and $t(\omega)\leq 2M_0$.
For each $0\leq i<q$,
$|(f^{S_{i+1}-S_i})'|\geq1/\delta^{\frac{1}{3}}$ holds on
$f^{S_i}\omega$. The second estimate in Lemma \ref{dist} gives
\begin{equation}\label{total}
{\rm ln}\frac{|(f^{S_{i+1}-S_i})'(f^{S_i}x)|}
{|(f^{S_{i+1}-S_i})'(f^{S_i}y)|}\leq
\frac{1}{\sqrt\delta}|f^{S_{i+1}}(x)-
f^{S_{i+1}}(y)|\leq\frac{\delta^{\frac{q-i-1}{3}}}{\sqrt\delta}
|f^{S_{q}}(x)- f^{S_{q}}(y)|.\end{equation} The additional at most
$2M_0$ iterates after the last large scale time $S_{q}$ does not
significantly affect the distortion. Consequently,
(\ref{equation}) holds.

\subsubsection{Exponential tails}
For each $1\leq i< n$, let $\mathcal Q_n^{(i)}$ denote the
collection of all $\omega\in\mathcal Q$ which have exactly $i$
large scale times before $n$ and $R(\omega)=n$. Let $|\mathcal
Q_n^{(i)}|=\sum_{\omega\in\mathcal Q_n^{(i)}}|\omega|.$ By
construction, two consecutive large scale times are separated at
least by $M_0$, and $|\{R=n\}|=\sum_{1\leq i\leq n/M_0}|\mathcal
Q_n^{(i)}|$ holds.

We estimate the measure of $\mathcal Q_n^{(i)}$. For $2\leq i\leq
n/M_0$ and an $i$ string $(k_1,\cdots,k_i)$ of positive integers
with $k_1+\cdots+k_i< n$, let $$\mathcal
Q_n(k_1,\cdots,k_i)=\left\{\omega\in \mathcal Q^{(i)}_{n}\colon
S_j-S_{j-1}=k_j\ \ 1\leq \forall j\leq i\right\}.$$ Let $|\mathcal
Q_n(k_1,\cdots,k_i)| =\sum_{\omega\in\mathcal
Q_n(k_1,\cdots,k_i)}|\omega|$. For each $\omega\in\mathcal
Q_n(k_1,\cdots,k_{i-1})$, let
$$\mathcal Q_n(\omega,k_i)=
\{\omega'\in\mathcal Q_n(k_1,\cdots,k_i)\colon
\omega'\subset\omega\}.$$ By definition,
\begin{align*}
|\mathcal Q_n(k_1,\cdots,k_i)|&=\sum_{\omega\in \mathcal
Q_n(k_1,\cdots,k_{i-1})} |\omega|\sum_{\omega'\in\mathcal
Q_n(\omega,k_i) } \frac{|\omega'|} {|\omega|}.\end{align*} To
estimate the fraction, let $\omega\in \mathcal
Q_n(k_1,\cdots,k_{i-1})$. Proposition \ref{escape} gives
$$\sum_{\omega'\in\mathcal Q_n(\omega,k_i) }
|f^{k_1+\cdots+k_{i-1}}(\omega')|\leq
\delta^{\frac{11}{12}}L^{-\frac{\lambda
k_i}{24}}|f^{k_1+\cdots+k_{i-1}}(\omega)|.$$ By construction,
$|f^{k_1+\cdots+k_{i-1}}(\omega)|\leq\delta$ holds. By
(\ref{total}), the distortion of $f^{k_1+\cdots+k_{j-1}}|\omega$
is uniformly bounded and
$$\sum_{\omega'\in\mathcal Q_n(\omega,k_i)
}\frac{|\omega'|}{|\omega|} \leq
2\delta^{\frac{11}{12}}L^{-\frac{\lambda k_i}{24}}.$$ Hence we
obtain $|\mathcal Q_n(k_1,\cdots,k_i)|\leq L^{-\frac{k_i}{24}}
|\mathcal Q_n(k_1,\cdots,k_{i-1})|.$ Using this inductively and
then $|\mathcal Q_n(k_1)|\leq
\delta^{\frac{11}{12}}L^{-\frac{\lambda k_1}{24}}$ which follows
from Proposition \ref{escape},
\begin{equation*}
|\mathcal Q_n(k_1,\cdots,k_i)| \leq \delta^{\frac{11 }{12}}
L^{-\frac{1}{24}(k_1+\cdots+k_i)}.\end{equation*}

For any given $m\in[n-2M_0,n)$ and $1\leq i\leq n/M_0$, the number
of all feasible $(k_1,\cdots,k_i)$ with $k_1+\cdots+k_i=m$ equals
the number of ways of dividing $m$ objects into $i$ groups, which
is $\left(\begin{smallmatrix}m+i\\i\end{smallmatrix}\right)$, and
by Stiring's formula for factorials, this number is $\leq e^{\beta
m}$, where $\beta\to0$ as $L\to\infty$. Hence we obtain
\begin{align*}
|\mathcal Q_n^{(i)}|&=
\sum_{m=n-2M_0}^{n-1}\sum_{k_1+\cdots+k_i=m}|\mathcal
Q_n(k_1,\cdots,k_i)|\\&\leq \delta^{\frac{11
}{12}}\sum_{m=n-2M_0}^{n-1}L^{-\frac{m}{24}}\sharp\left\{
(k_1,\cdots,k_i)\colon \sum_{j=1}^i k_j=m\right\}\leq
\delta^{\frac{11 }{12}}L^{-\frac{\lambda}{25}(n-2M_0)}.
\end{align*}
The same inequality remains to hold for $i=1$. Summing these over
all $1\leq i\leq n/M_0$,
$$|\{R=n\}|=\sum_{1\leq i\leq n/M_0}|\mathcal Q_n^{(i)}|\leq
\delta^{\frac{11 }{12}}L^{\frac{2\lambda M_0}{25}}
L^{-\frac{\lambda n}{26}}\leq\delta L^{-\frac{\lambda n}{26}}.$$
The last inequality follows from $L^{\frac{\lambda M_0}{12}}\leq
\delta^{\frac{1}{6}}$. This finishes the proof of (c).

\subsection{Proof of Theorem A}\label{A}
We have constructed an induced Markov map with exponential tails
and Lipschitz bounded distortion property. Then all the statements
of Theorem A follow from the abstract scheme in \cite{Y2}. See
[\cite{BLS} pp.644] for a concise explanation.

In fact, to apply \cite{Y2}, the right hand side (\ref{equation})
has to be bounded by a uniform constant multiplied by
$\beta^{s(x,y)}$, where $0<\beta<1$ and $s(x,y)$ is a {\it
separation time} \cite{Y2}. This is a direct consequence of
(\ref{equation}) and the uniform expansion of $F$ on each
$\omega\in\mathcal Q$.

\subsection{Proof of Theorem B}\label{B}
Let $\phi\colon S^1\to\mathbb R$ be a H\"older continuous function
which is coboundary. Let $\psi\in L^2(\mu)$ satisfy
$\phi=\psi\circ f-\psi$. We show that $\psi$ has a version which
is (H\"older) continuous\footnote{The same conclusion follows from
Liv${\rm {\check s}}$ic regularity results \cite{BLS,G}.} on the
entire $S^1$.

For each $n\geq 1$, let $\mathcal H_n$ denote the collection of
inverse branches of $F^n$. Let $\mathcal F_n$ denote the
$\sigma$-algebra generated by the intervals $h(S^1)$ for
$h\in\mathcal H_n$. It is an increasing sequence of
$\sigma$-algebras. For almost every $x\in S^1$, there exists an
well-defined sequence $\bar h=(h_1,h_2,\cdots)\in\mathcal
H_1^{\mathbb N}$ such that the element $F_n(x)$ of $\mathcal F_n$
containing $x$ is given by $F_n(x)= h_1\circ\cdots \circ
h_n(S^1)$. Equivalently, $h_n$ is the unique element of $\mathcal
H_1$ such that $F^{n-1}(x)\in h_n(S^1)$.

The Martingale convergence theorem shows that, for almost every
$x\in S^1$ and for all $\epsilon>0$,
\begin{equation}\label{martin} \frac{{\rm Leb}\{x'\in F_n(x)\colon
|\psi(x')-\psi(x)|>\epsilon\}}{{\rm Leb}(F_n(x))} \to 0\quad a.e.\
x\in S^1.\end{equation} Take a point $x_0$ such that this
convergence holds. Let $\bar h=(h_1,h_2,\cdots)$ denote the
corresponding sequence of $\mathcal H$ and write $\bar
h_n=h_1\circ\cdots\circ h_n$, so that $F_n(x_0)=\bar h_n(S^1)$.
(\ref{martin}) and the bounded distortion of $\bar h_n$ give, for
all $\epsilon>0$,
\begin{equation*}\label{mart}{\rm Leb}\{x\in S^1\colon |\psi(\bar
h_nx)-\psi(x_0)|>\epsilon\}\to0.\end{equation*} Choose a
subsequence $(n_k)$ such that for all $\epsilon>0$,
$$\sum_{k=1}^\infty{\rm Leb}\{x\in S^1\colon |\psi(\bar h_{n_k}x)
-\psi(x_0)|>\epsilon \}<\infty.$$ By the Borel-Cantelli lemma,
$\psi(\bar h_{n_k}x)\to\psi(x_0)$ holds for almost every $x$.

Let $S_k(x)=\sum_{i=1}^{n_k} \phi(\bar h_{i}x)$. As
$\psi(x)=\psi(\bar h_{n_k}x)+S_{n_k}(x),$ $S_{k}(x)$ converges for
almost every $x$. For all $x$ such that this convergence holds,
let $S(x)=\lim_{k\to\infty}S_k(x)$. The uniform contraction over
all inverse branches and the H\"older continuity of $\phi$ give
$|S_k(x)-S_k(y)|\leq K|x-y|^{\eta}$, where $\eta$ is the H\"older
exponent of $\phi$. Passing to the limit we obtain
$|S(x)-S(y)|\leq K|x-y|^{\eta}$, that is, $S$ is continuous. As
$\psi(x)=\psi(x_0)+S(x)$, it follows that $\psi$ has a version
which is continuous on the entire $S^1$.

Assume that $\psi$ is not a constant function. Fix $z, z'$ such
that $\psi(z)\neq\psi(z')$. Fix a singular point $y\in S$. We
evaluate the cohomologous equation along a sequence $(x_n)$ with
$x_n\to y$. By continuity, $\phi(x_n)+\psi(x_n)\to\phi(y)+\psi(y)$
holds. To obtain a contradiction, it suffices to choose two
sequences $x_n\to y$, $x_n'\to y$ so that $\psi(fx_n),
\psi(fx_n')$ converge to different limits. By the nature of the
singularities, for any sufficiently large $n>0$ there exists an
interval $I_n$ in $[y,y+1/n]$ such that $f(I_n)=S^1$. Pick two
points $x_n\in f^{-1}(z)\cap I_n$, $x_n'\in f^{-1}(z')\cap I_n$.
Clearly, $x_n\to y,x_n'\to y$ and $\psi(fx_n)\to
\psi(z),\psi(fx_n')\to\psi(z')$ hold. This completes the proof of
Theorem B.
 \qed
\medskip

For later use in the next section, we prove
\begin{cor}\label{integrable}
$\log|f'|$ is $\mu$-integrable and $\bar\lambda:=\int\log
|f'|d\mu>0$.
\end{cor}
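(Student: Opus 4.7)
The plan is to apply Kac's formula on the Young tower of Proposition \ref{Mar} to reduce the assertion to an integral of $\log|F'|$ against the induced measure on the base, and then to exploit the uniform expansion and bounded distortion of $F$. Let $\hat\mu$ denote the $F$-invariant probability measure absolutely continuous with respect to Lebesgue, supplied by the scheme of \cite{Y2} applied to $F$. Since $F\colon S^1\to S^1$ is full-branch Markov with Lipschitz distortion \eqref{equation}, a standard transfer-operator argument shows $d\hat\mu/d\mathrm{Leb}$ is bounded above. Combined with the exponential tail $|\{R=n\}|\leq\delta L^{-\lambda n/26}$ of Proposition \ref{Mar}, this gives $\bar R:=\int R\,d\hat\mu<\infty$, and Kac's formula yields
\begin{equation*}
\int \log|f'|\,d\mu=\frac{1}{\bar R}\int \log|F'|\,d\hat\mu.
\end{equation*}

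Positivity is the easier half. On each $\omega\in\mathcal Q$ one has $|F'|\geq\delta^{-1/3}$: the stopping portion contributes a factor $\geq\delta^{-1/3}$ by Proposition \ref{escape}(a), while the post-stopping good-pair iterates added in Section \ref{full} stay in $\{d_C\geq\varepsilon_0\}$, where Lemma \ref{derivative}(b) gives $|f'|\geq K_0^{-1}L\varepsilon_0>1$. Hence $\log|F'|\geq\tfrac{1}{3}\log(1/\delta)$ a.e., so $\bar\lambda\geq \log(1/\delta)/(3\bar R)>0$.

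For integrability, the bounded distortion \eqref{equation} together with $|F(\omega)|=1$ yields $\log|F'(x)|\leq \log(1+K)+\log(1/|\omega|)$ for $x\in\omega\in\mathcal Q$, reducing the task to bounding the entropy-type sum $\sum_{\omega\in\mathcal Q}|\omega|\log(1/|\omega|)$. I would group by $R(\omega)=n$ and, using concavity of $-t\log t$, dominate the $n$-th group by $m_n\log N_n+m_n\log(1/m_n)$, where $m_n:=|\{R=n\}|$ and $N_n$ is the cardinality of $\mathcal Q\cap\{R=n\}$. The second term is summable by the tail bound; to control the first, I would further decompose $\mathcal Q\cap\{R=n\}$ by the combinatorial itinerary $\mathbf{i}$ used in Section \ref{pro}, applying Lemma \ref{pro-R1} to bound the Lebesgue measure of each sub-collection by $L^{-R/3}$, and then closing the double sum exactly as in the counting argument of \eqref{sn}.

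The main obstacle will be this last entropy estimate, because countably many elements of $\mathcal Q$ can accumulate at singular points, making $N_n$ infinite and any crude cardinality bound useless. The hope is that the itinerary-weighted bookkeeping of Section \ref{pro} tames the $\log N_n$ contribution just as it did for $|\{S\geq n\}|$ there, delivering a convergent series.
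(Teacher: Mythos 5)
Your Kac-formula reduction and the positivity half are essentially the paper's argument: the paper writes $\nu$ for your $\hat\mu$, uses (\ref{mu}) to relate $\mu$ and $\nu$, and gets positivity simply from the uniform lower bound $|F'|\geq1/\delta^{1/3}$ on each $\omega$.

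For integrability, however, your argument has the gap you already flag, and it is fatal as stated. The Jensen/concavity bound $\sum_i t_i\log(1/t_i)\leq m\log N+m\log(1/m)$ requires the number $N$ of pieces to be finite; since elements of $\mathcal Q$ accumulate at singular points, $N_n$ is genuinely infinite for infinitely many $n$, and $m_n\log N_n$ is meaningless. Your proposed repair via the itinerary bookkeeping does not close this: Lemma \ref{pro-R1} and the combinatorics of (\ref{sn}) concern the partition $\mathcal P$ and stopping time $S$ of Proposition \ref{escape}, and they bound \emph{Lebesgue measure} of a sub-collection, not the cardinality-weighted entropy term $\log N_n$ that your Jensen step forces on you. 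Porting that machinery to $\mathcal Q$ and $R$ (which involve several rounds of stopping plus the good-pair iterates) is nontrivial, and even done, it would not produce a $\log N_n$ bound.

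The paper takes a cleaner route that avoids cardinality altogether. It first notes $\log\|F'|\omega\|\leq K'|\log|\omega||$ by bounded distortion, and then dominates $|\log|\omega||\leq|\omega|^{-\gamma}$ for a suitable $0<\gamma<1$. Because the density of $\nu$ is bounded above, this gives
$$
\int\log|F'|\,d\nu\;\leq\; K\sum_{\omega\in\mathcal Q}|\omega|^{1-\gamma}
\;=\;K\sum_{n>0}\ \sum_{R(\omega)=n}|\omega|^{1-\gamma},
$$
and the paper invokes the tail estimate of Proposition \ref{Mar}(c) to conclude finiteness. Replacing the entropy integrand $t\log(1/t)$ by the power $t^{1-\gamma}$ is the key move: it is a pointwise domination, so no counting of pieces is needed. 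You should adopt that domination rather than Jensen; with it your group-by-$n$ plan does go through.
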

\begin{proof}
Let $\nu$ denote the acim of $F$. By the classical theorem, the
density of $\nu$ is uniformly bounded from above and below. From
the uniform expansion and the bounded distortion of $F$, there
exist $K>0$, $K'>0$ such that $K\leq \log\Vert F'|\omega \Vert\leq
K' |\log|\omega||$ holds for every $\omega\in\mathcal Q$. Choose
$0<\gamma<1$ such that $|\log|\omega||\leq |\omega|^{-\gamma}$
holds for every $\omega\in\mathcal Q$. Then
\begin{align*}
K\leq \int \log|F'|d\nu\leq K\sum_{\omega\in\mathcal Q} \int|\log
|\omega||(d\nu|\omega) \leq K\sum_{\omega\in\mathcal Q}
|\omega|^{1-\gamma}=K\sum_{n>0}\sum_{\omega\colon
R(\omega)=n}|\omega|^{1-\gamma},
\end{align*}
which is finite by (c) in Proposition \ref{Mar}. As
\begin{equation}\label{mu}
\mu=\frac{1}{\int Rd\nu}\sum_{\omega\in\mathcal
Q}\sum_{i=0}^{R(\omega)-1} (f_*^i)\nu|\omega,\end{equation} we
have
\begin{align*}
0<\int \log|F'|d\nu&=\int\sum_{i=0}^{R(x)-1}\log|f'(f^ix)|d\nu(x)
=\sum_{\omega\in\mathcal
Q}\sum_{i=0}^{R(\omega)-1}\int_{\omega}\log|f'(f^ix)|d\nu(x)\\
&=\sum_{\omega\in\mathcal Q}\sum_{i=0}^{R(\omega)-1}\int \log |f'|
d((f_*)^i\nu|\omega)=\int Rd\nu\int\log|f'|d\mu<\infty.
\end{align*}
The desired result follows.
\end{proof}

\section{Entropy formula}
In this last section we prove an entropy formula, connecting the
metric entropy to the Lyapunov exponent. Although this formula is
known to hold for a broad class of maps with critical and singular
points, circle maps with logarithmic singularities have not been
treated.

\begin{theorem}
$h(f,\mu)=\int\log|f'|d\mu,$ where $h(f,\mu)$ denotes the metric
entropy.
\end{theorem}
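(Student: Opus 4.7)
The plan is to pass to the induced Markov map $F=f^R$ constructed in Proposition \ref{Mar}, prove the entropy formula there, and then transfer the identity back to $f$ using Abramov's formula together with the integral identity already established in the proof of Corollary \ref{integrable}. Concretely, let $\nu$ denote the ergodic $F$-invariant acim supplied by the Young tower machinery. By (c) of Proposition \ref{Mar} the return time $R$ has exponential tails, so $R\in L^1(\nu)$ and $\mu$ is given by (\ref{mu}). Abramov's formula then yields $h(f,\mu)\cdot\int R\,d\nu=h(F,\nu)$, while the telescoping computation reproduced inside the proof of Corollary \ref{integrable} gives $\int\log|F'|\,d\nu=\int R\,d\nu\cdot\int\log|f'|\,d\mu$. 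Thus the theorem reduces to the single identity
\begin{equation*}
h(F,\nu)=\int\log|F'|\,d\nu.
\end{equation*}

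To prove this identity I would verify the three hypotheses of the Rokhlin formula for a piecewise monotone, countably branched Markov map with an absolutely continuous invariant probability. First, the partition $\mathcal Q$ has finite entropy: the density of $\nu$ is uniformly bounded above, hence $\nu(\omega)\leq C|\omega|$, and since (c) of Proposition \ref{Mar} and $|\omega|\leq\delta\, e^{-cR(\omega)}$ (via the bounded distortion of $F$ on $\omega$ together with $F(\omega)=S^1$) imply that $|\omega|$ decays exponentially in $R(\omega)$, the series $\sum_{\omega}-\nu(\omega)\log\nu(\omega)$ converges. Second, $\mathcal Q$ is a one-sided generator for $F$: since $F(\omega)=S^1$ and $|\omega|\leq\delta\ll 1$, the bounded distortion (\ref{equation}) forces $|F'|\geq 1/(2\delta)$ on $\omega$, and induction together with (\ref{equation}) shows that the maximum diameter of the refinement $\bigvee_{i=0}^{n-1}F^{-i}\mathcal Q$ decays exponentially in $n$, so the refinement generates the Borel $\sigma$-algebra mod zero. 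Third, $\log|F'|\in L^1(\nu)$, which was already established in Corollary \ref{integrable}.

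Given these three ingredients, the classical Rokhlin identity $h_\nu(F,\mathcal Q)=\int\log|F'|\,d\nu$ follows by a direct computation from the definition of the conditional information function on the one-step partition, using the bounded distortion on each $\omega\in\mathcal Q$ to replace $\log(\nu(\omega)/\nu(F\omega))$ by $-\log|F'|$ up to a uniformly bounded error. The Kolmogorov--Sinai theorem then upgrades this to $h_\nu(F)=h_\nu(F,\mathcal Q)$ because $\mathcal Q$ is generating. Combining with the Abramov and Corollary \ref{integrable} identities yields the desired equality.

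The main obstacle I anticipate is not any single step but rather the careful verification that the induced system falls within the scope of the Rokhlin identity despite the underlying map $f$ having singularities of logarithmic type: one must confirm that the tower elements $\omega$ really shrink uniformly under $F$, that $H_\nu(\mathcal Q)<\infty$ (this uses the exponential tails in Proposition \ref{Mar} in an essential way, so one should not try to prove it directly from the geometry of the original map $f$), and that the bounded distortion (\ref{equation}) is strong enough to control the distortion of $F^n$ on elements of $\bigvee_{i=0}^{n-1}F^{-i}\mathcal Q$ uniformly in $n$. Once these points are dispatched, everything is standard and the upper Ruelle-type bound $h(f,\mu)\leq\int\log|f'|\,d\mu$ does not need to be proved separately, since the chain of equalities already yields the formula as an equality.
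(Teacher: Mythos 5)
Your route is genuinely different from the paper's. The paper proves the entropy formula directly on $(f,\mu)$ via Ma\~n\'e's argument \cite{M}: it introduces a family of functions $\rho_\beta$, estimates the measures of dynamical balls $B(x,\rho_\beta;n)$ from above and below using Lemma \ref{exp}, deep return times, and the tail estimate in Proposition \ref{Mar}, and invokes the Brin--Katok style characterization of entropy. You instead propose to prove the Rokhlin formula $h(F,\nu)=\int\log|F'|\,d\nu$ for the induced map $F$ and transfer it back to $f$ via an Abramov-type identity together with the telescoping identity $\int\log|F'|\,d\nu=\int R\,d\nu\cdot\int\log|f'|\,d\mu$ from Corollary \ref{integrable}. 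This is a reasonable and well-known alternative strategy.

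There is, however, a genuine gap at the Abramov step. The map $F=f^R$ in Proposition \ref{Mar} is \emph{not} a first return map of $f$ to a subset of $S^1$; it is an inducing scheme in which every $\omega\in\mathcal Q$ is a subset of the full circle and $F(\omega)$ covers $S^1$. The classical Abramov formula therefore does not apply directly. What one actually has, by applying the first-return Abramov formula to the Young tower $\hat\Delta$ over $(\mathcal Q,R)$ with tower map $\hat F$ and lifted measure $\hat\nu$, is $h(F,\nu)=h(\hat F,\hat\nu)\cdot\int R\,d\nu$. The projection $\pi\colon\hat\Delta\to S^1$, $(x,i)\mapsto f^i(x)$, is only a factor map (it is not injective, since formula (\ref{mu}) is a sum, not a disjoint union, of pushforwards), so a priori one only gets $h(\hat F,\hat\nu)\geq h(f,\mu)$. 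By the Abramov--Rokhlin relative entropy formula, $h(\hat F,\hat\nu)=h(f,\mu)+h(\hat F\,|\,\pi)$, and the identity you need is precisely $h(\hat F\,|\,\pi)=0$. Without it, your chain of identities yields only the Ruelle inequality $h(f,\mu)\leq\int\log|f'|\,d\mu$, not the equality. Showing $h(\hat F\,|\,\pi)=0$ amounts to showing that (mod $0$) the $f$-itinerary of a point determines its tower coordinates, i.e.\ the sequence of large-scale/return times in the construction of Section \ref{full}; this is plausible but is a non-trivial verification about the combinatorics of the inducing scheme, and is exactly the sort of point that papers on generalized Abramov formulas (Zweim\"uller, Pesin--Senti, Bruin--Todd) devote real work to. Your proposal treats this as immediate, and that is the missing ingredient. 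A secondary point: for $H_\nu(\mathcal Q)<\infty$ you should also bound the \emph{number} of elements $\omega\in\mathcal Q$ with $R(\omega)=n$ (at most exponentially in $n$), since the exponential tail on $|\{R=n\}|$ alone does not control $\sum_{R(\omega)=n}-\nu(\omega)\log\nu(\omega)$ without some information on how the measure is spread among the pieces.
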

A proof of this theorem uses M\~an\'e's argument \cite{M} that is
outlined as follows. Define a family
$\{\rho_\beta\}_{\beta\in(0,\delta)}$ of functions on $S^1$ by:
$\rho_\beta(x)=d_{C}(x)$ if $d_C(x)\leq\beta$,
$\rho_\beta(x)=d_{S}(x)$ if $d_S(x)\leq\beta$, and
$\rho_\beta(x)=\beta$ in all other cases. Obviously
$0<\rho_\beta\leq\beta$ holds, and Lemma \ref{derivative} and
Lemma \ref{integrable} give $\int -\log\rho_\beta d\mu<\infty$.
Let $B(x,\rho_\beta;n):=\{y\in S^1\colon
|f^ix-f^iy|<\rho_\beta(f^ix) \ 0\leq\forall i<n\}$.
From \cite{M}, for $\mu$-a.e. $x\in S^1$,
$$\sup_{\beta>0}\limsup_{n\to\infty}
-\frac{1}{n}\log\mu(B(x,\rho_\beta;n))=h(f,\mu).$$ Fix $\kappa>0$
so that $Z=\{y\in S^1\colon \frac{d\mu}{d{\rm Leb}}(y)\geq
\kappa\}$ has positive Lebesgue measure. We show that, for any
$\beta$ and a.e. $x\in Z$, the $\limsup$ converges to
$\bar\lambda=\int\log|f'|d\mu$.

\subsection{A lemma}
For $x\in S^1$ and $n>0$, let $J_n(x)=[x-D_n(x),x+D_n(x)]$. Let
$\beta\cdot J_n(x)$ denote the interval of length $2\beta D_n(x)$
centered at $x$.
\begin{lemma}\label{incl}
For any $x\in S^1$ and $n>0$ we have:
\smallskip

\noindent $({\rm a})$ $B(x,\rho;n)\supset \beta\cdot J_n(x)$;

\noindent $({\rm b})$ if $n$ is a deep return time of $x$, then
$B(x,\rho;n+1)\subset J_n(x)$.
\end{lemma}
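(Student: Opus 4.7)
The plan is to handle the two inclusions separately, both via Lemma \ref{dist} but in complementary directions. For (a) I will turn the inclusion $y\in\beta\cdot J_n(x)$ into orbit-closeness bounds $|f^ix-f^iy|<\rho_\beta(f^ix)$ by combining the bounded-distortion estimate on $J_n(x)$ with the fact that the definition of $D_n(x)$ was engineered so that $|(f^i)'x|\,D_n(x)$ is comparable to $d_C(f^ix)d_S(f^ix)/\sqrt L$. For (b) I will argue by contradiction, showing that the deep-return derivative lower bound of Lemma \ref{exp} forces any point at the boundary of $J_n(x)$ to be already sent outside the dynamical tube at time $n$.

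For (a), fix $y\in\beta\cdot J_n(x)$, so that $|x-y|\leq\beta D_n(x)\leq D_n(x)$. Running an induction on $0\leq i<n$ to guarantee the orbit of $y$ stays off $C\cup S$ and the hypotheses of Lemma \ref{dist} are met, the mean value theorem and the first estimate of Lemma \ref{dist} give
\[|f^ix-f^iy|\leq 2|(f^i)'x|\cdot|x-y|\leq 2\beta|(f^i)'x|\,D_n(x).\]
The definition of $D_n(x)$ yields
\[|(f^i)'x|\,D_n(x)\leq \frac{|(f^i)'x|\,d_i(x)}{\sqrt L}=\frac{d_C(f^ix)\,d_S(f^ix)}{\sqrt L},\]
which, since $d_C,d_S\leq 1$, is bounded by $L^{-1/2}\min\bigl(d_C(f^ix),d_S(f^ix)\bigr)$ and simultaneously by $L^{-1/2}$. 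For $L$ large this is strictly smaller than $\rho_\beta(f^ix)$ in each of the three defining cases of $\rho_\beta$, which completes part (a).

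For (b), suppose for contradiction that some $y\in B(x,\rho_\beta;n+1)$ lies outside $J_n(x)$; after passing to the connected component of $B(x,\rho_\beta;n+1)$ through $x$, I pick $\hat y$ on the segment from $x$ to $y$ with $|x-\hat y|=D_n(x)$, so that $\hat y\in J_n(x)$ and $\hat y$ still satisfies $|f^i x-f^i\hat y|<\rho_\beta(f^ix)$ for $i\leq n$. Bounded distortion of $f^n$ on $J_n(x)$ from Lemma \ref{dist} together with Lemma \ref{exp} (which applies because $n$ is a deep return) gives
\[|f^nx-f^n\hat y|\geq \tfrac12|(f^n)'x|\,D_n(x)\geq \tfrac12\sqrt{d_C(f^nx)}.\]
On the other hand $\hat y$ lying in the dynamical ball forces $|f^nx-f^n\hat y|<\rho_\beta(f^nx)\leq\beta<\delta$. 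A short case split on whether $\rho_\beta(f^nx)=d_C(f^nx)\leq\beta$ or $\rho_\beta(f^nx)=\beta<d_C(f^nx)$, combined with $d_C(f^nx)\leq\delta\ll 1$ (since $f^nx\in C_\delta$), gives $\tfrac12\sqrt{d_C(f^nx)}>\rho_\beta(f^nx)$ in either case, contradicting the previous inequality.

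The main obstacle I anticipate is the bookkeeping step in (b) that lets me pick $\hat y$ inside the dynamical ball: a priori $B(x,\rho_\beta;n+1)$ is only a measurable set in $S^1$, and the intermediate-point argument requires working with its connected component through $x$. Once this convention is in place the one-line contradiction above finishes the proof; the remainder is a routine distortion calculation tracking the constants hidden in Lemma \ref{dist}.
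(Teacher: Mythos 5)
Your proof is correct and follows essentially the same route as the paper's: part (a) unwinds Lemma \ref{dist} together with the inequality $|(f^i)'x|\,D_n(x)\leq d_C(f^ix)\,d_S(f^ix)/\sqrt L$, and part (b) combines Lemma \ref{exp} with the bounded distortion of $f^n$ on $J_n(x)$, just as the paper does. The only remark worth making is that the "connected component" precaution in (b) is automatically satisfied, since $\rho_\beta(f^ix)\leq\min\bigl(d_C(f^ix),d_S(f^ix)\bigr)$ for every $i$, so $f$ is monotone on each interval $\bigl(f^ix-\rho_\beta(f^ix),f^ix+\rho_\beta(f^ix)\bigr)$ and $B(x,\rho_\beta;n+1)$ is already a single interval about $x$.
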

\begin{proof}
Lemma \ref{dist} gives $|f^i(x)-f^i(y)|\leq
\frac{2\beta}{\sqrt{L}}d_C(f^i(x))d_S(f^i(x))<\beta$ for all $y\in
\beta\cdot J_n(x)$ and every $0\leq i<n$. If
$\rho_{\beta}(f^i(x))<\beta$, then $|f^i(x)-f^i(y)|
<\beta(f^i(x)).$ This proves (a). (b) follows from Lemma \ref{exp}
and the bounded distortion of $f^n|J_n(x)$ from Lemma \ref{dist}.
\end{proof}

\subsection{Upper estimate} From the ergodic theorem,
there exists a sequence $(\theta_k)_k$ of positive numbers such
that $\theta_k\to0$ and the following holds for each $\theta_k$:
for any small $\epsilon>0$ and $\mu$-a.e. $x$, there exists $n(x)$
such that for all $n\geq n(x)$, $\min\{d_{C}(f^nx),d_S(f^nx)\}\geq
e^{-(\theta_k+\epsilon) n}$. Also, for any small $\epsilon>0$ and
$\mu$-a.e. $x$ there exists $n'(x)$ such that for all $n\geq
n'(x)$, $e^{(\bar\lambda-\epsilon)n}\leq|(f^n)'x|\leq
e^{(\bar\lambda+\epsilon)n}$. In view of the definition
(\ref{Theta}), for $\mu$-a.e. $x$ and all large $n$ depending on
$x$, \begin{equation}\label{In}|(J_n(x))|\geq
\frac{1}{\sqrt{L}}e^{-(\bar\lambda+3\epsilon)n-2\theta_k
n}.\end{equation}

Choose $x$ to be a Lebesgue density point of $Z$. As
$|J_n(x)|\to0$,
 $\mu(J_n(x))\geq \kappa|J_n(x)|$ holds for all large $n$.
 (a) in Lemma \ref{incl} and (\ref{In}) give
$$-\frac{1}{n}\log\mu(B(x,\rho;n))\leq-\frac{1}{n}\log\mu(I_n(x))
\leq \bar\lambda+4\epsilon+2\theta_k.$$ Hence
$\limsup_{n\to\infty}
-\frac{1}{n}\log\mu(B(x,\rho_\beta;n))\leq\bar\lambda
+4\epsilon+2\theta_k$ holds for $\mu$-a.e. $x\in Z$. As $\theta_k$
and $\epsilon$ can be made arbitrarily small, the desired upper
estimate holds.

\subsection{Lower estimate} Write $J$ for $J_n(x)$.
For $y\in S^1$, let $A(y)=\sharp\{0\leq i<R(y)\colon f^i(y)\in
J\}.$ Let $N>0$  be a large integer. Let $B(y)=\sharp\{0\leq
i<N\colon F^i(y)\in J\}.$ Let $T(y)=\sum_{i=0}^{N-1}A(F^iy)$ and
$U(y)=\sum_{i=0}^{N-1}R(F^iy).$ Clearly, $T(y)\leq U(y)$ and
$T(y)\leq \max_{0\leq i<N}\{R(F^iy)\}B(y)$ hold.

Let $m=\frac{30\bar\lambda n}{\lambda{\rm ln}L}.$ Let $Y=\{y\in
S^1\colon U(y)\leq m\}.$
 By the $F$-invariance of $\nu$,
\begin{equation}\label{sous} N\int Ad\nu=\int T d\nu=
\int_{Y} T d\nu+\int_{Y^c} T d\nu.
\end{equation}
We estimate the first integral of the right-hand-side.  Let
$X=\{y\colon |B(y)/N-\nu(J)|<\nu(J)\}.$ We have $\int_{X\cap Y} S
d\nu\leq \int_{X} mBd\nu$, because $T\leq mB$ holds on $Y$. The
definition of $X$ gives $\int_{X} mBd\nu\leq KmN\nu(J)$, where $K$
is a uniform constant bounding the density of $\nu$. Hence
\begin{equation}\label{sous1}
\int_YTd\nu=\int_{X\cap Y} T d\nu+ \int_{X^c\cap Y} T d\nu \leq
KmN\nu(J)+m\nu(X^c).\end{equation} The second term of the
light-hand-side can be made arbitrarily small by making $N$ large.
Indeed, by the ergodic theorem for $(F,\nu)$,
 $B/N\to\nu(J)$ a.e. as $N\to\infty$.
 The convergence in probability
gives $\nu(X^c)\to0$ as $N\to\infty$.

We now estimate the second integral of the right-hand-side of
(\ref{sous}). For a given $N$-string $(a_1,\cdots,a_{N})$ of
positive integers, let $R_{a_1\cdots a_N}=\left\{y\in S^1\colon
R(F^iy)=a_i,1\leq i\leq N\right\}.$ For each component $Q$ of
$R_{a_1\cdots a_N}$, (c) in Proposition \ref{Mar} and the bounded
distortion of $F^{a_1+\cdots+a_{N-1}}|Q$ give
$|\{ y\in Q\colon R(F^{a_1+\cdots+a_{N-1}}(y))=a_{N}\}|\leq
2\delta L^{-\frac{\lambda a_{N}}{26}}|Q|.$ Summing this over all
components gives $|R_{a_1\cdots a_{N}}|\leq L^{-\frac{\lambda
a_{N}}{26}}|R_{a_1\cdots a_{N-1}}|,$ and therefore $|R_{a_1\cdots
a_N}|\leq L^{-\frac{\lambda}{26} (a_1+\cdots+a_N)}$. This yields
\begin{equation*}\int_{Y^c} T d\nu\leq \int_{Y^c} U d\nu
=\sum_{\stackrel{(a_1,\cdots,a_N)}{a_1+\cdots+a_N>m
}}(a_1+\cdots+a_N)\nu(R_{a_1\cdots a_N}) \leq
K\sum_{r>m}\sum_{\stackrel{(a_1,\cdots,a_N)}{a_1+\cdots+a_N=r
}}L^{-\frac{\lambda r}{27}}.\end{equation*} As $R>M_0$,
$\frac{N}{r}\leq\frac{1}{M_0}$ holds. By Stirling's formula for
factorials, the number of all feasible $(a_1,\cdots,a_N)$ with
$a_1+\cdots+a_N=r$ is $\leq e^{cr}$, where $c\to0$ as
$L\to\infty$. Hence
\begin{equation}\label{sous3}\int_{Y^c} T d\nu \leq
KL^{-\frac{\lambda m }{28}}\leq K|J|.\end{equation} The last
inequality follows from (\ref{In}) and the definition of $m$.

Plugging (\ref{sous1}) (\ref{sous3}) into the right-hand-side of
(\ref{sous}), dividing the result by $N$ and passing $N\to\infty$
we obtain
\begin{equation}\label{pass}
\mu(J_n(x))\int Rd\nu=\int A d\nu\leq Km\nu(J_n(x)) =\frac{30K\bar
\lambda}{\lambda{\rm ln} L}n|J_n(x)|.\end{equation} The equality
is from (\ref{mu}).

Choose  $(n_k)_k$ denote an increasing infinite sequence of deep
return times of $x$. By the Poincar\'e recurrence, $\mu$-a.e.
point in $Z$ has such a sequence. Let $\epsilon>0$ be an arbitrary
small number. For any sufficiently large $n_k$, $|J_{n_k}(x)|\leq
e^{-(\bar\lambda-\epsilon)n_k}$ holds. For such $n_k$,
(\ref{pass}) and (b) in Lemma \ref{incl} give
$$-\frac{1}{n_k+1}\log\mu(B(x,\rho;n_k+1))\geq
-\frac{1}{n_k+1}\log\mu(J_{n_k}(x))\geq -\frac{\log
n_k}{2(n_k+1)}+\frac{n_k}{n_k+1}(\bar\lambda-\epsilon).$$ Taking
the limit $k\to\infty$ gives $\limsup_{n\to\infty}-\frac{1}{n}
\log\mu(B(x,\rho;n))\geq\bar\lambda-\epsilon$. As $\epsilon$ is
arbitrary, the desired lower estimate holds. This completes the
proof of Theorem 5.1.


\begin{thebibliography}{99}


\bibitem{BV} V. Baladi and M. Viana,
Strong stochastic stability and rate of mixing for unimodal maps.
{\it Ann. Sci. \'Ec. Norm. Sup.} {\bf 29} (1996) 483-517.


\bibitem{BC85} M. Benedicks and L. Carleson,
On iterations of $1-ax\sp 2$ on $(-1,1)$. {\it Ann. of Math. (2)}
{\bf 122} (1985) no. 1, 1--25.

\bibitem{BC91} M. Benedicks and L. Carleson, The dynamics of the
H\'enon map. {\it Ann of Math. (2)} {\bf 133} (1991) 73--169.

\bibitem{BLS} H. Bruin, S. Luzzatto, S. van Strien,
Decay of correlations in one-dimensional dynamics. {\it Ann.
Scient. \'Ec. Norm. Sup.,} $4^e$ s\'erie, t. 36, (2003) p.
621-646.


\bibitem{BHN05} H. Bruin, M. Holland and M. Nicol,
Liv${\rm \breve{s}}$ic regularity for Markov systems. {\it Ergod.
Th. Dynam. Sys.} {\bf 25} (2005) 1739-1766.




\bibitem{DHL06}
K. Diaz-Ordaz, M. Holland, S. Luzzatto. Decay of correlations for
one-dimensional Lorenz-like maps with critical points and
singularities. {\it Stochastics and Dynamics}, 6 (2006) pp.
423-458.

\bibitem{G} S. Gou\"ezel, Regularity of coboundaries of non
uniformly expanding maps. {\it Proc. of AMS.} 134:2 (2006)
391-401.



\bibitem{Jak81} M. Jakobson,
Absolutely continuous invariant measures for one-parameter
families of one-dimensional maps. {\it Comm. Math. Phys.} {\bf 81}
(1981) 39--88.


\bibitem{KS}
A. Katok and J-M. Strelcyn. {\it Invariant manifolds, Entropy and
Billiards; Smooth maps with Singularities.} {\rm Lecture Notes in
Mathematics} 1222 (1980)


\bibitem{KN} G. Keller and T. Nowicki,
Spectral theory, zeta functions and the distribution of periodic points
for Collet-Eckmann maps, {\it Comm. Math. Phys.} {\bf 149} (1992) 31-69.


\bibitem{L} F. Ledrappier, Some properties of absolutely
continuous invariant measures on an interval. {\it Ergod. Th. $\&$
Dynam. Sys.} (1981) {\bf 1}, 77-93.

\bibitem{LMP04}
S. Luzzatto, I. Melbourne, and F. Paccaut, The Lorenz attractor is
mixing, {\it Commun. Math. Phys.} {\bf 260} (2005) 393-401.




\bibitem{LT}
S. Luzzatto and W. Tucker,
Non-uninformly expanding dynamics in maps with criticalities and
singularities, {\it Publ. Math. I.H.E.S.} {\bf 89} (1999) 179-226.

\bibitem{LV}
S. Luzzatto and M. Viana, Positive Lyapunov exponents for
Lorenz-like families with criticalities.
{\it Ast\'erisque} {\bf 261} (2000) 201-237.



\bibitem{M} R. M\~an\'e, A proof of Pesin's formula.
{\it Ergod. Th. $\&$ Dynam. Sys.} {\bf 1} (1981) 95-102.

\bibitem{Mo} T. Morita, Local limit theorem and distribution
of periodic orbits of Lasota-Yorke transformations with infinite
Markov partition. {\it J. Math. Soc. Japan} Vol. 46 No.2, (1994)
309-343.


\bibitem{PRV} M. J. Pacifico, A. Rovella, M. Viana,
Infinite-modal maps with global chaotic behavior.
{\it Ann. of Math.} {\bf 148} (1998) 441-484.


\bibitem{PP} Parry and M. Pollicott,
Zeta functions and the periodic orbit structures of hyperbolic
dynamics. {\it Ast\'erisque} {\bf 187-188}, SMF, (1990)





\bibitem{TW09}
H. Takahasi and Q.D. Wang,
Nonuniformly expanding 1d maps with logarithmic singularities, preprint.

\bibitem{T93a} M. Tsujii, A proof of
Benedicks-Carleson-Jakobson Theorem. {\it  Tokyo J. Math.} 16
(1993) no. 2, 295--310.

\bibitem{T93b} M. Tsujii,
Positive Lyapunov exponents in families of one-dimensional
dynamical systems. {\it Invent. Math.} {\bf 111} (1993) no. 1,
113--137.

\bibitem{T98} M. Tsujii,
Piecewise expanding maps on the plane with singular ergodic
properties. {\it Ergod. Th. Dynam. Sys.} {\bf 20} no.6 (1998)
1851-1857.






\bibitem{Wa} Q.D. Wang, Periodically forced double homoclinic
loops to a dissipative saddle, preprint.

\bibitem{WO} Q.D. Wang and W. Ott, Dissipative homoclinic
loops and rank one chaos,  preprint.

\bibitem{WOk} Q.D. Wang and Ali Oksasoglu, Periodic
occurrence of dynamical behavior of homoclinic tangles. {\it
Physica D: Nonlinear Phenomena,} {\bf 239} (7) (2010) 387-395.

\bibitem{Y1} L.-S. Young, Decay of correlations of certain quadratic maps.
{\it Comm. Math. Phys.} {\bf 146} (1992) 123-138.

\bibitem{Y2} L.-S. Young, Statistical properties of dynamical systems
with some hyperbolicity.
{\it Ann. of Math.} {\bf 147} (1998) 585-650.












\end{thebibliography}
\end{document}